\theoremstyle{plain}
\newtheorem{proposition}{Proposition}
\newtheorem{theorem}{Theorem}
\newtheorem{corollary}{Corollary}
\theoremstyle{remark}
\newtheorem{assumption}{Assumption}
\newtheorem{example}{Example}
\newcommand{\norm}[1]{\left\lVert#1\right\rVert}
\newcommand{\B}{\mathcal{B}}
\newcommand{\W}{\mathcal{W}}
\newcommand{\e}{\epsilon}
\newcommand{\w}{\omega}
\newcommand{\Z}{\mathbb{Z}}
\newcommand{\E}{\mathbb{E}}
\newcommand{\C}{\mathcal{C}}
\newcommand{\R}{\mathbb{R}}
\let\temp\phi
\let\phi\varphi
\let\varphi\temp  
\begin{document}

\title{Lower bounds on the rate of convergence for accept-reject-based Markov chains in Wasserstein and total variation distances}

\author[1]{Austin Brown \orcidlink{0000-0003-1576-8381} \thanks{ad.brown@utoronto.ca}}
\author[2]{Galin L. Jones \orcidlink{0000-0002-6869-6855} \thanks{galin@umn.edu}}

\affil[1]{Department of Statistical Sciences, University of Toronto, Toronto, Ontario, Canada}
\affil[2]{School of Statistics, University of Minnesota, Minneapolis, MN, U.S.A.}

\maketitle

\begin{abstract} 
To avoid poor empirical performance in  Metropolis-Hastings and other accept-reject-based algorithms practitioners often tune them by trial and error. Lower bounds on the convergence rate are developed in both total variation and Wasserstein distances in order to identify how the simulations will fail so these settings can be avoided, providing guidance on tuning. Particular attention is paid to using the lower bounds to study the convergence complexity of accept-reject-based Markov chains and to constrain the rate of convergence for geometrically ergodic Markov chains. The theory is applied in several settings. For example, if the target density concentrates with a parameter $n$ (e.g. posterior concentration, Laplace approximations), it is demonstrated that the convergence rate of a Metropolis-Hastings chain can be arbitrarily slow if the tuning parameters do not depend carefully on $n$. This is demonstrated with Bayesian logistic regression with Zellner's g-prior when the dimension and sample increase together and flat prior Bayesian logistic regression as $n$ tends to infinity.
\end{abstract}


\noindent\textit{Keywords:}
Bayesian statistics;
convergence complexity;
Markov chain Monte Carlo;
Metropolis-Hastings;
Wasserstein distance

\section{Introduction}
\label{sec:intro}

Metropolis-Hastings algorithms \citep{hast:1970, metr:1953, Tierney1998} are foundational to the use of Markov chain Monte Carlo (MCMC) methods in statistical applications \citep{broo:etal:2011}. Metropolis-Hastings requires the choice of a proposal distribution and this choice is crucial to the efficacy of the resulting sampling algorithm.  The choice of tuning parameters for the proposal distribution prompted ground-breaking research into optimal scaling  \citep{robe:weak:1997} and adaptive MCMC methods \citep{Haario2001}. 
 Nevertheless, in many applications, this remains a difficult task and is often accomplished by trial and error.  

Metropolis-Hastings Markov chains are an instance of accept-reject-based (ARB) Markov chains, which require a proposal distribution and either accept or reject a proposal at each step. While Metropolis-Hastings chains are the most widely used and studied ARB chains, others are of practical importance or theoretical interest.  This includes, among others, non-reversible Metropolis-Hastings \citep{bier:2015}, Barker's algorithm \citep{bark:1964}, portkey Barker's algorithm \citep{vats:portkey:2022}, and the Lazy-Metropolis-Hastings algorithm \citep{lats:robe:2013}. The main difference between varieties of ARB chains is the definition of the acceptance probability, but the choice of a proposal remains crucial for the efficacy of the associated sampling algorithm. 
Indeed, there has been recent interest in the optimal scaling problem for more general ARB chains \citep{agra:etal:2021}.

Under standard regularity conditions, ARB chains converge to their target distribution and thus will eventually produce a representative sample from it. If the proposal is not well chosen, then this convergence can take prohibitively long. A significant goal is to provide a tool for identifying when an ARB chain will \textit{fail} to produce a representative sample within any reasonable amount of time. This is achieved by developing novel lower bounds on the convergence rate.   

The lower bounds lead to similar conclusions whether the convergence is measured in total variation or a Wasserstein distance, but the technical conditions required for the Wasserstein distances are slightly stronger than those for total variation.
Most commonly, the convergence of ARB chains has been measured with total variation \citep{Rosenthal1995, Tierney1994}. However more recently, the Wasserstein distances \citep{Kantorovich1957, Villani2003, Villani2008} from optimal transportation have been used in the convergence analysis of high-dimensional MCMC algorithms \citep{Dalalyan2017, Durmus2015, Hairer2014, qin2019convergence, raja:spar:2015}.
The interest in Wasserstein distances is due to the observation that Wasserstein distances may enjoy improved scaling properties in high dimensions compared to the total variation distance. Properties of Wasserstein distances which are beneficial to scaling to high dimensions also make developing lower bounds more challenging. Compared to the relatively straightforward proofs for total variation lower bounds, new techniques are required to develop lower bounds in Wasserstein distances and hence this is the main technical contribution. The practical importance is that when the lower bounds show an ARB Markov chain has poor convergence properties in total variation, then they also show it will have poor convergence properties in Wasserstein distances.
 
Convergence rate analysis of MCMC chains on general state spaces largely has centered on establishing upper bounds on the convergence rate \citep{Jones2001, Rosenthal1995} or, failing that, in establishing qualitative properties such as geometric ergodicity, which ensures an exponential convergence rate to the target distribution in either total variation or Wasserstein distances  \citep{Hairer2011, JonesHobert2004, Meyn2009, Rosenthal1995, Tierney1994}.  There has been important work establishing geometric ergodicity for Metropolis-Hastings chains \cite[e.g.][]{Jarner2000, Mengersen1996, Roberts1996geo}, but, even though there is some recent work for specific random-walk Metropolis-Hastings chains \citep{andr:etal:MH:2022, bhat:jone:2023}, it has not yet produced much understanding of their quantitative rates of convergence.  The lower bounds developed here allow the exponential rate of convergence to be bounded below and hence, in conjunction with the existing approaches, will help provide a clearer picture of the quantitative convergence properties of geometrically ergodic ARB chains. 

Convergence complexity of MCMC algorithms with respect to the dimension $d$ and sample size $n$ has been of significant recent interest \citep{Belloni2009, Austin2022, KarlOskar2019, Johndrow2019, qin2019convergence, raja:spar:2015, Yang2017, yang:etal:2016, JZhuo2021}.  With one exception \cite{Austin2022}, this work has not considered the setting where both $n$ and $d$ are large for Metropolis-Hastings algorithms. Lower bounds can be used to study conditions on the tuning parameters which imply the convergence rate can be arbitrarily slow as the dimension $d$ and sample size $n$ increase, even if the chain is geometrically ergodic for a fixed $d$ and $n$.  This can then be used to choose the scaling so that the poor high-dimensional convergence properties might be avoided. One implication of the theory developed below is that the scaling should be chosen explicitly as a function of both $n$ and $d$, with the details dependent upon the setting, since if they are chosen otherwise, the resulting ARB chain can have poor convergence properties. In comparison, optimal scaling results are derived in the setting where the dimension $d \to \infty$, but the explicit scaling dependence on $n$ is often infeasible to compute.

Throughout, the theory is illustrated on examples of varying complexity, some of which concern ARB chains that are not versions of Metropolis-Hastings. Several significant, practically relevant applications of Metropolis-Hastings are studied.
The lower bounds are studied on the random walk Metropolis-Hastings (RWMH) algorithm for general log-concave target densities.  In particular, an application to RWMH for Bayesian logistic with Zellner's g-prior is studied as $d, n \to \infty$. In this example, the convergence rate can be arbitrarily slow if the scaling does not depend carefully on $d$ and $n$. 
Following that, the focus turns to a class of Metropolis-Hastings algorithms using a general Gaussian proposal which in special cases is the proposal used in RWMH \citep{Tierney1994}, Metropolis-adjusted Langevin algorithm (MALA) \citep{robe:rose:1998:optimal, Roberts1996Langevin}, and other variants such as Riemannian manifold MALA \citep{Girolami2011}.
Metropolis-Hastings algorithms are considered for more general target densities under conditions which imply the densities concentrate towards their maximum point with $n$ (e.g. Bayesian posterior concentration, Laplace approximations) as $d, n \to \infty$. Once again, in this general setup, if the tuning parameters do not depend carefully on $n$, the convergence rate can be arbitrarily slow as $d, n \to \infty$. As an application of the general result, flat prior Bayesian logistic regression is studied as $n \to \infty$.

General lower bounds on the convergence rate have not been previously investigated for ARB chains outside of some Metropolis-Hastings independence samplers \citep{Austin2022, Roberts2011, Wang2020}. 
Others have instead focused on upper bounding the spectral gap using the conductance \citep{AndrieuVihola2015, Rabee2020, Hairer2014, JarnerYuen2004, Johndrow2019, Lawler1988, Schmidler2011,  Woodard2009b, Woodard2014}. An upper bound on the spectral gap lower bounds the convergence rate. This requires a reversible Markov chain \citep{robe:rose:1997, Lawler1988}, which is not assumed in the development of the general lower bounds below.
For reversible ARB Markov chains, there are some new comparisons to the lower bounds in total variation and Wasserstein distances to the conductance.
In order to make such comparisons, an equivalence between the spectral gap and a weaker Wasserstein convergence rate for general reversible Markov chains is developed, which is of independent interest.

Recently, a significant related research interest has emerged regarding the dimension dependence on the mixing time for the MALA algorithm \citep{Chewi2021, Dwivedi2018, Wu2021, Lee2021}. 
Lower bounds on the total variation mixing times were shown for MALA in some specific examples \citep{Chewi2021, Wu2021, Lee2021}.
A more general lower bound on the mixing time in the chi-square divergence was also developed for reversible Markov chains \citep{Wu2021}.
In comparison, the lower bounds hold in Wasserstein distances and total variation which are more commonly used in modern convergence analysis and are developed for more general Metropolis-Hastings algorithms than MALA.
Moreover, lower bounds on the convergence rates provide important information compared to mixing times as convergence rates are used to upper bound the asymptotic variance in the Markov chain central limit theorem \citep{Jones2004} and mean squared error \citep{Krzysztof2013, Rudolf2012} as well as in concentration inequalities \citep{Joulin2010}.

The remainder is organized as follows. Section~\ref{sec:arb_mc} provides a formal definition of ARB Markov chains and some relevant background.  Total variation lower bounds are studied in Section~\ref{sec:tv_lb}, including a comparison with conductance methods. Then Section~\ref{sec:wass_lb} presents lower bounds in Wasserstein distances along with a comparison to conductance methods. Applications of the theoretical results are considered in Section~\ref{sec:applications}. Section~\ref{sec:componentwise} considers random scan componentwise ARB algorithms and develops extensions of the general lower bounds in both total variation and Wasserstein distances. Final remarks are given in Section~\ref{sec:final}.  Most proofs are deferred to the Appendix.

\section{Accept-reject-based Markov chains}
\label{sec:arb_mc}

Let $\Omega$ be a nonempty Borel measurable space and $\B(\Omega)$ the Borel sigma algebra on $\Omega$.
Suppose $\Pi$ is a probability measure on $(\Omega, \B(\Omega))$ supported on a nonempty measurable set $\Theta \subseteq \Omega$ having density $\pi$ with respect to a sigma-finite measure $\lambda$ on $(\Omega, \B(\Omega))$.
The support of $\pi$ is $\Theta$ so for each $\theta \in \Theta$, $\pi(\theta) > 0$.
Assume for each $\theta \in \Theta$, $\{ \theta \}$ is Borel measurable with $\lambda(\{ \theta \}) = 0$ and $\lambda$ is strictly positive (e.g., Lebesgue measure) meaning for nonempty, measurable open sets $U \subseteq \Theta$, $\lambda(U) > 0$.  While this general setting will suffice for most of the work, at various points more specific assumptions on $\Omega$ will be required.

For each $\theta \in \Omega$, suppose that $Q(\theta, \cdot)$ is a proposal distribution on $(\Omega, \B(\Omega))$ having density $q(\theta, \cdot)$ with respect to $\lambda$. If $a : \Omega \times \Omega \to [0,1]$, define the acceptance probability by
$$
A(\theta) = \int a(\theta, \theta') q(\theta, \theta') d\lambda(\theta').
$$
Let $\delta_\theta$ denote the Dirac measure at the point
$\theta$.
The accept-reject-based Markov kernel $P$ is defined on $(\Omega, \B(\Omega))$ for $\theta \in \Omega$ and measurable sets $B \subseteq \Omega$ as
$$
P(\theta, B) = \int_{B} a(\theta, \theta') q(\theta, \theta') d\lambda(\theta') + \delta_{\theta}(B) [1-A(\theta)].
$$
Let $\Z_+$ denote the set of positive integers.
The Markov  kernel for iteration time $t \in \Z_+$ with $t \ge 2$ is
defined recursively by 
\[
P^t(\theta, \cdot)
= \int P^{t-1}(\cdot, \cdot) dP(\theta, \cdot)
\]
where $P \equiv P^1$.

It is assumed that $a$ is such that $\Pi$ is invariant for $P$.
Invariance is often ensured through a detailed balance (i.e., reversibility) condition, but this is not required in general. 
For example, detailed balance does not hold for non-reversible Metropolis-Hastings yet $\Pi$ is invariant \citep{bier:2015}. 
Of course, detailed balance holds for Metropolis-Hastings (MH) where
\begin{equation}
\label{eq:a_mh}
a_{MH}(\theta, \theta') = 
\begin{cases}
\frac{ \pi(\theta') q(\theta', \theta) }{\pi(\theta)
    q(\theta, \theta')} \wedge 1, \text{ if }\pi(\theta) q(\theta, \theta') > 0 \\ 
1, \hspace{2cm} \text{ if } \pi(\theta) q(\theta, \theta') = 0.
\end{cases}
\end{equation}  

Define the total variation distance between probability measures $\mu, \nu$ on $( \Omega, \B(\Omega) )$ by 
$
\norm{\mu - \nu}_{\text{TV}}
= \sup_{f \in \mathcal{M}_1}
\left[ \int f d\mu - \int f d\nu \right]
$
where $\mathcal{M}_1$ is the set of Borel measurable functions $f : \Omega \to [0, 1]$.
It is well-known that if $P$ is $\Pi$-irreducible, aperiodic, and Harris recurrent, the ARB Markov chain converges in total variation, as $t \to \infty$,  to the invariant distribution \citep{Tierney1994, Tierney1998} and hence will eventually produce a representative sample from $\Pi$. 
Section~\ref{sec:tv_lb} considers measuring this convergence with the total variation norm, while Section~\ref{sec:wass_lb} considers doing so with Wasserstein distances from optimal transportation.

\section{Total variation lower bounds}
\label{sec:tv_lb}

\subsection{Lower bounds}
\label{sec:lb_comp}

Previously, a lower bound in the total variation distance was shown for the Metropolis-Hastings independence sampler \citep[Lemma 1]{Wang2020}. A similar argument provides a lower bound for general ARB chains.

\begin{theorem}
\label{thm:lb}
For every $t \in \Z_+$ and every $\theta \in \Theta$,
\[
\norm{P^t(\theta, \cdot) - \Pi}_{\text{TV}}
\ge \left[ 1 - A(\theta) \right]^t.
\]
\end{theorem}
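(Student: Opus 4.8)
The plan is to exploit the fact that the ARB kernel retains a point mass at the current state whenever a proposal is rejected, combined with the fact that $\Pi$ charges no singletons. Concretely, I would test the two measures $P^t(\theta, \cdot)$ and $\Pi$ against the indicator $f = \mathbf{1}_{\{\theta\}}$, which lies in $\mathcal{M}_1$ since $\{\theta\}$ is Borel measurable and $f$ takes values in $\{0,1\} \subseteq [0,1]$. Because $\Pi$ has density $\pi$ with respect to $\lambda$ and $\lambda(\{\theta\}) = 0$, we get $\int f \, d\Pi = \Pi(\{\theta\}) = 0$. The supremum in the definition of the total variation distance then yields immediately
\[
\norm{P^t(\theta, \cdot) - \Pi}_{\text{TV}} \ge \int f \, dP^t(\theta, \cdot) - \int f \, d\Pi = P^t(\theta, \{\theta\}).
\]
It therefore suffices to prove that $P^t(\theta, \{\theta\}) \ge [1 - A(\theta)]^t$.

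I would establish this by induction on $t$. For the base case $t = 1$, evaluate $P(\theta, \{\theta\})$ directly from its definition: the absolutely continuous part $\int_{\{\theta\}} a(\theta, \theta') q(\theta, \theta') \, d\lambda(\theta')$ vanishes because $\lambda(\{\theta\}) = 0$, while the rejection term contributes $\delta_\theta(\{\theta\})[1 - A(\theta)] = 1 - A(\theta)$, giving equality. For the inductive step, apply the Chapman--Kolmogorov identity $P^t(\theta, \{\theta\}) = \int_\Omega P^{t-1}(\theta', \{\theta\}) \, P(\theta, d\theta')$ and split the integrating measure $P(\theta, \cdot)$ into its $\lambda$-continuous part and its atom at $\theta$. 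The continuous part integrates a nonnegative quantity and may be discarded, so retaining only the atomic contribution $P^{t-1}(\theta, \{\theta\})[1 - A(\theta)]$ and invoking the inductive hypothesis $P^{t-1}(\theta, \{\theta\}) \ge [1 - A(\theta)]^{t-1}$ delivers $P^t(\theta, \{\theta\}) \ge [1 - A(\theta)]^t$.

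This argument has essentially no hard step; the probabilistic content is just that the chain can remain at $\theta$ for $t$ consecutive steps by rejecting each proposed move, an event of probability at least $[1 - A(\theta)]^t$, and that $\Pi$ cannot detect this return since it places no mass on $\{\theta\}$. The only point requiring a little care is the bookkeeping in the inductive step: one must verify that discarding the absolutely continuous part of $P(\theta, \cdot)$ is legitimate (it is, since every integrand is nonnegative) and that the inductive hypothesis is applied at the correct base point $\theta$ rather than at a generic $\theta'$. If one prefers to avoid induction, the same bound follows by lower bounding $P^t(\theta, \{\theta\})$ directly by the probability of the ``reject at every step'' path, but the inductive formulation keeps the measure-theoretic manipulations cleanest.
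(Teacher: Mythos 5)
Your proof is correct and follows essentially the same route as the paper: both test against the indicator of $\{\theta\}$, use $\Pi(\{\theta\})=0$, and lower bound $P^t(\theta,\{\theta\})$ by the probability of rejecting at every step. The only cosmetic difference is that you run an explicit induction on $P^t(\theta,\{\theta\})$ via Chapman--Kolmogorov, whereas the paper iterates the one-step inequality $\int \psi\, dP(\w,\cdot)\ge [1-A(\w)]\psi(\w)$ for $[0,1]$-valued $\psi$; the content is identical.
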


Note that this bound is sharp for the specific case of the Metropolis-Hastings independence sampler \cite{Wang2020}. Although it has been suppressed here, $A(\theta)$ often depends on the sample size $n$ and the dimension $d$; for example, consider the setting where $\pi$ is a Bayesian posterior density.  If $A(\theta) \to 0$ as $n \to \infty$ or $d \to \infty$, then the total variation lower bound will approach 1 and the ARB Markov chain will be ineffective in that regime.  However, once this has been identified, the proposal may be chosen so as to avoid the difficulties.  

Of course, $A(\theta)$ can be difficult to calculate analytically.  However, this is often unnecessary since it typically suffices to study an upper bound for it.  If a finer understanding is required, it is straightforward to use Monte Carlo sampling to produce a functional estimate of $A(\theta)$.  Both of these approaches will be illustrated 
 in several examples later. Consider the following simple example of a non-reversible Metropolis-Hastings Markov chain \citep[Section 4.3]{bier:2015}.

\begin{example} 
For $d \in \Z_+$, let $\R^d$ be the Euclidean space of dimension $d$.
Denote the standard $p$-norms by $\norm{\cdot}_p$.
Denote the Gaussian distribution on $\R^d$ with mean $\mu$ and symmetric, positive-definite covariance matrix $C$ by $N(m, C)$.
For $h \in (0, 1]$, consider the Crank-Nicolson proposal $N(\sqrt{1 - h} \theta, h I_d)$ so that the proposal has the standard Gaussian density as its invariant density, which is denoted $\rho$. Suppose $\pi$ is a Gaussian density of the distribution $N(0, \sigma^2 I_d)$ with $\sigma^2 > 1$.
For $c = 1/(2^d\sigma^d)$,
\[
c \rho(\theta) / \pi(\theta)
= \frac{1}{2^d} \exp\left(-\frac{1}{2} (1 - 1/\sigma^2) \theta^T \theta\right)
\in (0, 1].
\]
If
\[
\gamma(\theta, \theta')
= c \left[ \frac{\rho(\theta)}{\pi(\theta)} \pi(\theta) q(\theta, \theta') - \frac{\rho(\theta')}{\pi(\theta')} \pi(\theta') q(\theta', \theta) \right],
\]
then $\gamma$ defines a valid non-reversible Metropolis-Hastings Markov chain \cite[Section 4.1]{bier:2015} since
\begin{align*}
\gamma(\theta, \theta') 
+ \pi(\theta') q(\theta', \theta)
&= c \frac{\rho(\theta)}{\pi(\theta)} \pi(\theta) q(\theta, \theta') 
+ \left[ 1 - c \frac{\rho(\theta')}{\pi(\theta')} \right] 
\pi(\theta') q(\theta', \theta)
\ge 0.
\end{align*}  
If $A_{NR}$ and $A_{MH}$ are the acceptance probabilities for non-reversible MH and the usual MH with the same proposal, respectively, then 
\begin{align*}
A_{NR}(\theta)
&= \int \left[ \frac{\gamma(\theta, \theta') + \pi(\theta') q(\theta', \theta)}{\pi(\theta) q(\theta, \theta')}  \wedge 1 \right]
q(\theta, \theta') d\theta'
\\
&\le 2^{-d} + A_{MH}(\theta).
\end{align*}
For large $d$, the lower bounds of Theorem~\ref{thm:lb} can be similar for both Markov chains. However, for small values of $d$, the lower bound for non-reversible MH can be appreciably smaller than the lower bound for MH. That is, in low dimensions, even if 
$A_{MH}(\theta) \approx 0$ so that the MH algorithm converges slowly, the non-reversible MH may enjoy a much faster convergence rate. 
\end{example}

In general, it is difficult to compare ARB Markov chains based on lower bounds.  However, there are some settings where it may be informative.  

\begin{corollary}
\label{cor:compare}
Let $P_1$ and $P_2$ be ARB kernels with acceptance functions $a_1$ and $a_2$, respectively.   If $a_{1}(\theta, \theta') \ge a_{2}(\theta, \theta')$ for all $\theta, \theta'$, then
$$
\norm{P_{2}^{t} (\theta, \cdot) - \Pi }_{\text{TV}} \ge [1 - A_{2}(\theta)]^t \ge [1 - A_{1}(\theta)]^t.
$$
\end{corollary}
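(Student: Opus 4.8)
The plan is to derive both inequalities essentially for free from Theorem~\ref{thm:lb} together with a monotonicity observation relating the acceptance functions to the acceptance probabilities. First I would invoke Theorem~\ref{thm:lb} applied to the kernel $P_2$: for every $t \in \Z_+$ and every $\theta \in \Theta$, we have $\norm{P_2^t(\theta, \cdot) - \Pi}_{\text{TV}} \ge [1 - A_2(\theta)]^t$. This establishes the first inequality immediately, with no additional work required, since Theorem~\ref{thm:lb} is stated for an arbitrary ARB kernel and $P_2$ is one such kernel by hypothesis.

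For the second inequality, the key step is to show that the pointwise domination $a_1(\theta, \theta') \ge a_2(\theta, \theta')$ on acceptance \emph{functions} transfers to a domination $A_1(\theta) \ge A_2(\theta)$ on the acceptance \emph{probabilities}. This follows by monotonicity of the integral: since
\[
A_i(\theta) = \int a_i(\theta, \theta') q(\theta, \theta') \, d\lambda(\theta')
\]
and the proposal density $q(\theta, \theta')$ is nonnegative, integrating both sides of the pointwise inequality $a_1(\theta, \theta') q(\theta, \theta') \ge a_2(\theta, \theta') q(\theta, \theta')$ against $\lambda$ yields $A_1(\theta) \ge A_2(\theta)$. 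Consequently $1 - A_2(\theta) \ge 1 - A_1(\theta)$, and since both acceptance probabilities lie in $[0,1]$, both bases are nonnegative.

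The final step is to raise to the $t$-th power while preserving the inequality. Because $1 - A_2(\theta) \ge 1 - A_1(\theta) \ge 0$ and $t$ is a positive integer, the map $x \mapsto x^t$ is nondecreasing on $[0,\infty)$, so $[1 - A_2(\theta)]^t \ge [1 - A_1(\theta)]^t$. Chaining this with the first inequality gives the full chain
\[
\norm{P_2^t(\theta, \cdot) - \Pi}_{\text{TV}} \ge [1 - A_2(\theta)]^t \ge [1 - A_1(\theta)]^t,
\]
as claimed. Honestly, there is no substantive obstacle here: the result is a direct corollary, and the only point requiring any care is ensuring both bases are nonnegative so that the monotonicity of $x \mapsto x^t$ applies, which is guaranteed because acceptance probabilities take values in $[0,1]$.
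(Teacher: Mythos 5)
Your proof is correct and follows exactly the route the paper intends: apply Theorem~\ref{thm:lb} to $P_2$, then integrate the pointwise inequality $a_1 \ge a_2$ against the common proposal density to get $A_1(\theta) \ge A_2(\theta)$, and use monotonicity of $x \mapsto x^t$ on $[0,1]$. The only implicit assumption worth flagging is that $P_1$ and $P_2$ share the same proposal density $q$, which is what the paper intends (as its portkey Barker's example makes explicit).
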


If $A_{1}(\theta) \to 0$ (perhaps as $d,n \to \infty$), then the
lower bound for both $P_1$ and $P_2$ will tend to 1.  Of course, if
$A_{1}(\theta) \to c >0$, then it may still be the case that
$A_{2}(\theta) \to 0$. That is, $P_1$ might avoid poor convergence
properties while $P_2$ does not. 

\begin{example}
Consider the portkey Barker's kernel \citep{vats:portkey:2022} 
where if $d(\theta, \theta') \ge 0$ is symmetric so that $d(\theta, \theta') = d(\theta', \theta)$, then
\begin{equation}
\label{eq:a_pb}
a_{PB}(\theta, \theta') = \frac{ \pi(\theta') q(\theta', \theta) }{\pi(\theta)
    q(\theta, \theta') + \pi(\theta') q(\theta', \theta) + d(\theta, \theta')}.
\end{equation}
Notice that Barker's \citep{bark:1964} algorithm is recovered if $d(\theta, \theta') =0 $. 

It is well-known that Metropolis-Hastings is more efficient than Barker's in the Peskun sense \citep{pesk:1973} so that the variance of the asymptotic normal distribution for a sample mean will be larger if the Monte Carlo sample is produced using Barker's algorithm.  However, the asymptotic variance is only greater by a factor of 2 \citep{lats:robe:2013} and there are some settings where portkey Barker's is preferred \citep{vats:portkey:2022}.

Notice that if the same proposal density is used for Metropolis-Hastings and portkey Barker's, then
\[
  a_{PB}(\theta, \theta') \le a_{MH}(\theta, \theta') \le 1
\]
and hence, by Corollary~\ref{cor:compare},
\[
\norm{P_{PB}^t(\theta, \cdot) - \Pi}_{\text{TV}} \ge (1-A_{PB}(\theta))^t \ge (1-A_{MH}(\theta))^t . 
\]
\end{example}

\subsection{Geometric ergodicity}

 The kernel $P$ is $(\rho, M)$-geometrically ergodic if there is a   $\rho \in (0, 1)$ and a function $\theta \mapsto M(\theta) \in (0, \infty)$ such that for every $\theta \in \Theta$ and every $t \in \Z_+$,
$$
\norm{P^t(\theta, \cdot) - \Pi(\cdot)}_{\text{TV}}
\le M(\theta) \rho^t.
$$ 

There has been substantial effort put into establishing the existence of $\rho$ for some ARB Markov chains \cite[see, e.g.,][]{Jarner2000, Mengersen1996, Roberts1996Langevin, Roberts1996geo}, but, outside of the Metropolis-Hastings independence sampler \citep{Wang2020} and some random walk MH algorithms \cite{andr:etal:MH:2022, bhat:jone:2023} these efforts have not yielded constraints on $\rho$.

\begin{theorem}
\label{thm:lb_rate}
If $P$ is $(\rho, M)$-geometrically ergodic, then 
\[
\rho \ge 1 - \inf_{\theta \in \Theta} A(\theta).
\]
\end{theorem}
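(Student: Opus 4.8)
The plan is to combine the already-established total variation lower bound of Theorem~\ref{thm:lb} with the defining inequality of $(\rho, M)$-geometric ergodicity, and then extract a constraint on $\rho$ by taking a limit in $t$. First I would fix an arbitrary $\theta \in \Theta$. Theorem~\ref{thm:lb} gives the lower bound $\norm{P^t(\theta, \cdot) - \Pi}_{\text{TV}} \ge [1 - A(\theta)]^t$, while the geometric ergodicity hypothesis gives the upper bound $\norm{P^t(\theta, \cdot) - \Pi}_{\text{TV}} \le M(\theta) \rho^t$. Chaining these yields $[1 - A(\theta)]^t \le M(\theta) \rho^t$ for every $t \in \Z_+$.

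The key step is then to exploit that this inequality holds for \emph{all} $t$, not just one. If $1 - A(\theta) > \rho$, the left side would grow geometrically faster than the right side, forcing $M(\theta) \ge [(1-A(\theta))/\rho]^t \to \infty$ as $t \to \infty$, which contradicts $M(\theta) < \infty$. More cleanly, I would take $t$-th roots to obtain $1 - A(\theta) \le M(\theta)^{1/t} \rho$ and let $t \to \infty$; since $M(\theta) \in (0, \infty)$ is a fixed constant, $M(\theta)^{1/t} \to 1$, and thus $1 - A(\theta) \le \rho$. (One must handle the degenerate case $1 - A(\theta) = 0$ separately, but there the conclusion $\rho \ge 0$ is immediate since $\rho \in (0,1)$.)

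Finally, since $\rho \ge 1 - A(\theta)$ holds for the arbitrary fixed $\theta$, it holds uniformly over $\Theta$, so I would take the supremum of the right side over $\theta \in \Theta$, which is equivalent to $\rho \ge 1 - \inf_{\theta \in \Theta} A(\theta)$, giving exactly the claimed bound.

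I do not anticipate a serious obstacle here; the argument is short and the main content is essentially the contrapositive of the geometric-decay hypothesis played against the explicit lower bound. The only point requiring mild care is the order of quantifiers: the constant $M(\theta)$ may depend on $\theta$, so the $t \to \infty$ limit must be taken with $\theta$ held fixed before the supremum over $\theta$ is applied. Because $\rho$ itself is a single constant independent of $\theta$, passing to the supremum over $\theta$ is harmless and produces the stated uniform lower bound involving $\inf_{\theta} A(\theta)$.
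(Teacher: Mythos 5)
Your argument is correct and matches the paper's proof essentially verbatim: chain Theorem~\ref{thm:lb} with the geometric-ergodicity bound to get $[1-A(\theta)]^t \le M(\theta)\rho^t$, take $t$-th roots, let $t\to\infty$ so that $M(\theta)^{1/t}\to 1$, and then pass to the infimum over $\theta$. No gaps.
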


\begin{example}
Let $b > 1$ and consider the following Gaussian densities
$$
f(x,y) = \frac{b}{\pi} e^{-(x^2 + b^2 y^2)} \quad \text{ and } \quad g(x,y) = \frac{b}{\pi} e^{-(b^2x^2 + y^2)} .
$$
Set 
$$
\pi(x,y) = \frac{1}{2} f(x,y) + \frac{1}{2} g(x,y)
$$
and consider a random walk MH kernel with a Gaussian proposal centered at the previous step and scale matrix $ h I_2$. This RWMH is geometrically ergodic \citep{Jarner2000}, but only the existence of $\rho < 1$ has been established.   

For any $(x,y)$
$$
A(x,y)  \le  \iint\frac{\pi(u,v)}{\pi(x,y)}  q( (x, y), (u,v) ) du dv 
\le  \frac{1}{b h [ e^{-(x^2 + b^2 y^2)} + e^{-(b^2x^2 + y^2)}]} .
$$
By Theorem~\ref{thm:lb_rate},
$$
\rho \ge 1 - \frac{1}{2bh} .
$$
\end{example}

\subsection{Comparison with conductance methods}
\label{sec:compare_conductance}

In this section, the ARB Markov kernel is assumed to be reversible with respect to $\Pi$. 
Let $L^p(\mu)$ denote the Lebesgue spaces with respect to a measure $\mu$.
Conductance can be used to lower bound the convergence rate and this is compared to the techniques used above.
If a Markov kernel $P$ satisfies
\[
\sup_{
\substack{f \in L^2(\Pi) \\ 
\norm{f - \int f d\Pi}_{L^2(\Pi)} \not= 0}
} \frac{\norm{P f - \int f d\Pi}_{L^2(\Pi)}}{\norm{f - \int f d\Pi}_{L^2(\Pi)}} = \beta \in (0, 1),
\]
then it is said to have a $1 - \beta$ spectral gap.
For measurable sets $B$ with $0 < \Pi(B) < 1$, define
\[
k_P(B) = \frac{\int_B P(\cdot, B^c) d\Pi}{\Pi(B) \Pi(B^c)}
\]
and define the conductance 
$k_P = \inf\{ k_P(B) :0 < \Pi(B) < 1 \}$ \citep{Lawler1988}.
The conductance can be used to upper bound the spectral gap \citep[Theorem 2.1]{Lawler1988}:
\[
1 - \beta \le k_P.
\]

Since $P$ is a reversible Markov kernel, there is an equivalence between a spectral gap and geometric convergence in total variation \citep[Theorem 2.1]{robe:rose:1997}. Specifically, there is a $\rho \in (0, 1)$ such that for every probability measure $\mu$ with $d\mu / d\Pi \in L^2(\Pi)$, there is a constant $C_\mu \in (0, \infty)$ such that
\[
\norm{\mu P^t - \Pi}_{\text{TV}} \le C_\mu \rho^t
\]
if and only if there is a spectral gap with $\beta \le \rho$.
Under these conditions, the convergence rate can be lower bounded by the conductance so that
\[
1 - \rho \le k_P.
\]
Under further conditions on the function $M$, the conductance will also lower bound the convergence rate if $P$ is $(\rho, M)-$geometrically ergodic \citep[Proposition 2.1 (ii), Theorem 2.1]{robe:rose:1997}.

For sets $B$ with $0 < \Pi(B) \le 1/2$, to move from $B$ to $B^c$, the Metropolis-Hastings kernel $P$ must accept and \citep[Proposition 2.16]{Hairer2014} 
\[
k_P
\le 2 \sup_{x \in B} A(x).
\]

\begin{proposition}
\label{prop:conductance_accept_ub}
Assume $(\Omega, d)$ is a nonempty metric space.
Let $P$ be a reversible ARB Markov kernel with an upper semicontinuous acceptance probability $A(\cdot)$.
Then 
\[
k_P \le \inf_{\theta \in \Theta} A(\theta).
\]
\end{proposition}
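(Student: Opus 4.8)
The plan is to exploit that $k_P$ is defined as an \emph{infimum} over admissible sets, so to bound it from above it suffices to exhibit, for each fixed $\theta_0 \in \Theta$ and each $\epsilon > 0$, a single set $B$ with $0 < \Pi(B) < 1$ for which $k_P(B) \le A(\theta_0) + \epsilon$. The natural candidate is a small metric ball $B_r := \{\theta : d(\theta, \theta_0) < r\}$: shrinking $r$ will simultaneously force the acceptance probability on $B_r$ close to $A(\theta_0)$, via upper semicontinuity, and force $\Pi(B_r^c)$ close to $1$.

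The key estimate comes from the ARB structure of $P$. For any $\theta \in B_r$ the Dirac term in the kernel satisfies $\delta_\theta(B_r^c) = 0$, since $\theta \notin B_r^c$; thus the holding/rejection mass never contributes to a move out of $B_r$, and
\[
P(\theta, B_r^c) = \int_{B_r^c} a(\theta, \theta') q(\theta, \theta') d\lambda(\theta') \le \int_\Omega a(\theta, \theta') q(\theta, \theta') d\lambda(\theta') = A(\theta).
\]
Informally, to leave $B_r$ the chain must accept a proposal, and the total probability of accepting any proposal is at most $A(\theta)$. Integrating over $B_r$ against $\Pi$ gives $\int_{B_r} P(\cdot, B_r^c)\, d\Pi \le \Pi(B_r) \sup_{x \in B_r} A(x)$, and therefore
\[
k_P(B_r) = \frac{\int_{B_r} P(\cdot, B_r^c)\, d\Pi}{\Pi(B_r) \Pi(B_r^c)} \le \frac{\sup_{x \in B_r} A(x)}{\Pi(B_r^c)}.
\]
This is a sharpening of the off-the-shelf bound $k_P \le 2 \sup_{x \in B} A(x)$ quoted above: the factor $2$ there arises from bounding $\Pi(B^c) \ge 1/2$, whereas taking $B_r$ small lets $\Pi(B_r^c) \to 1$ and removes it.

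It then remains to control the two factors as $r \to 0$. Upper semicontinuity of $A$ at $\theta_0$ provides, for the given $\epsilon$, a radius $r_\epsilon$ such that $\sup_{x \in B_r} A(x) \le A(\theta_0) + \epsilon$ for all $r \le r_\epsilon$. For the denominator, $\theta_0 \in \Theta$ lies in the support of $\Pi$, so every ball $B_r$ has $\Pi(B_r) > 0$; and since $\bigcap_{r > 0} B_r = \{\theta_0\}$ with $\Pi(\{\theta_0\}) = 0$ (because $\lambda(\{\theta_0\}) = 0$), continuity of measure from above gives $\Pi(B_r) \downarrow 0$, so $\Pi(B_r) < 1$ for all small $r$ and $\Pi(B_r^c) \to 1$. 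Combining the two, $\limsup_{r \to 0} k_P(B_r) \le A(\theta_0) + \epsilon$, and since $k_P \le k_P(B_r)$ for every admissible $r$ this yields $k_P \le A(\theta_0) + \epsilon$. Letting $\epsilon \to 0$ and then taking the infimum over $\theta_0 \in \Theta$ gives the claim.

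The step I expect to require the most care is the measure-theoretic nondegeneracy of the balls, namely verifying $0 < \Pi(B_r) < 1$ for small $r$ and $\Pi(B_r) \to 0$; this is precisely where the hypotheses that $(\Omega, d)$ is a metric space, that $\lambda$ is strictly positive with $\lambda$-null singletons, and that $\theta_0$ belongs to the support $\Theta$ are used. The upper semicontinuity hypothesis is the other essential ingredient, and it enters in exactly the right direction: it bounds $A$ from above on a neighborhood of $\theta_0$, which is what keeps $\sup_{x \in B_r} A(x)$ from exceeding $A(\theta_0)$ in the limit.
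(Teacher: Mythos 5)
Your proposal is correct and follows essentially the same route as the paper's proof: shrinking metric balls around $\theta_0$, the observation that the Dirac (rejection) term contributes nothing to $P(\theta, B_r^c)$ for $\theta \in B_r$, the bound $k_P(B_r) \le \sup_{x \in B_r} A(x)/\Pi(B_r^c)$, and upper semicontinuity plus $\Pi(B_r) \downarrow 0$ to conclude. The only cosmetic difference is that you invoke the $\epsilon$-neighborhood characterization of upper semicontinuity where the paper extracts a sequence of near-maximizers, which amounts to the same thing.
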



The results of this section imply that under the stronger conditions of reversibility, that $\int M^2 d\Pi < \infty$, and  $A(\cdot)$ is upper semicontinuous, existing results \cite[Proposition 2.1 (ii), Theorem 2.1]{robe:rose:1997} combined with Proposition~\ref{prop:conductance_accept_ub} will yield the conclusion of Theorem~\ref{thm:lb_rate}.

\section{Wasserstein lower bounds}
\label{sec:wass_lb}

Wasserstein distances have become popular in the convergence analysis of high-dimensional MCMC algorithms due to improved scaling properties in high dimensions compared to the total variation distance. Where total variation controls the bias of bounded, measurable functions, Wasserstein distances control the bias of Lipschitz functions. The regularity of Lipschitz functions compared to bounded functions contributes to improved scaling in high dimensions but makes developing lower bounds for Wasserstein distances more challenging. In particular, the relatively straightforward technique of Theorem~\ref{thm:lb} cannot be used and new techniques are required. However, lower bounds are available for many Wasserstein distances which are analogous to the lower bounds from Section~\ref{sec:tv_lb} and which can result in similar conclusions if the acceptance probability is not well-behaved.

The Wasserstein lower bounds also yield necessary conditions for Wasserstein geometric ergodicity of ARB chains that are similar to those available for total variation \citep{Roberts1996geo}.  A result of independent interest establishes the equivalence between a spectral gap and Wasserstein geometric ergodicity for general reversible Markov chains, complementing existing results for total variation \citep{robe:rose:1997}.

\subsection{Lower bounds}
\label{sec:w_lb_compare}

For probability measures $\mu, \nu$, let $\C(\mu, \nu)$ be the set of all joint probability measures with marginals $\mu, \nu$.
With a metric cost function $c(\cdot, \cdot)$, the $c$-Wasserstein distance of order $p \in \Z_+$ is 
\[
  \W_{c}^p\left( \mu, \nu \right) =  \left( \inf_{\Gamma \in
    \C(\mu, \nu)} \int
  c(\theta, \w)^p d\Gamma(\theta, \w) \right)^{1/p}.
\]

\noindent Consider the following assumption on the target density.
\begin{assumption}
\label{assumption:ub_pi}
Suppose $\Omega = \R^d$ where $d \in \Z_+$ and $\Theta \subseteq \mathbb{R}^d$.
Suppose $\pi$ is a density with respect to Lebesgue measure and suppose $s = \sup_{\theta \in \Theta} \pi(\theta) < \infty$.
\end{assumption}

\noindent The Wasserstein distances are required to satisfy the following condition.
\begin{assumption}
\label{assumption:wass_condition}
Suppose for some constant $C_{0, d}$, $c(\cdot, \cdot) \ge C_{0, d} \norm{\cdot - \cdot}_1$.
\end{assumption}
\noindent As norms are equivalent on $\R^d$, these cost metrics include any norm $\norm{\cdot}$ on $\R^d$.

For the special case of the Metropolis-Hastings independence sampler, there are existing lower bounds for some Wasserstein distances \citep{Austin2022}.  The above assumptions are enough to ensure lower bounds for the general ARB Markov chain and a wider class of Wasserstein distances.

\begin{theorem}
\label{thm:wasserstein_lb}
Let assumptions \ref{assumption:ub_pi} and \ref{assumption:wass_condition} hold. If $C_{d, \pi} = C_{0, d} d [ 2 s^{\frac{1}{d}}
  ( 1 + d )^{1 + \frac{1}{d}} ]^{-1}$, then for every $t \in \Z_+$ and every $\theta \in \R^d$
\[
\W_{c}^p(P^t(\theta, \cdot), \Pi)
\ge C_{d, \pi} [ 1 - A(\theta) ]^{t \left( 1 + \frac{1}{d} \right)}.
\]
\end{theorem}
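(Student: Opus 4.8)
The plan is to exploit the same rejection atom that drives Theorem~\ref{thm:lb}, but to convert it into a transport cost rather than a total variation gap. The key observation is that $P^t(\theta, \cdot)$ places mass at least $m := [1 - A(\theta)]^t$ on the single point $\theta$: since $\lambda(\{\theta\}) = 0$, the kernel satisfies $P(\theta, \{\theta\}) = 1 - A(\theta)$, and an induction on $t$ using $P^t(\theta, \{\theta\}) \ge P(\theta, \{\theta\})\, P^{t-1}(\theta, \{\theta\})$ (the contribution of the chain rejecting at the first step and remaining at $\theta$) gives $P^t(\theta, \{\theta\}) \ge m$. Because $\Pi$ has a bounded density (Assumption~\ref{assumption:ub_pi}), it cannot absorb this atomic mass within a small neighborhood of $\theta$, and this is what forces a quantitative lower bound on the Wasserstein cost.

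First I would reduce to the case $p = 1$. For any coupling $\Gamma$ and $p \ge 1$, Jensen's inequality gives $(\int c^p\, d\Gamma)^{1/p} \ge \int c\, d\Gamma$, so $\W_c^p(P^t(\theta,\cdot), \Pi) \ge \W_c^1(P^t(\theta,\cdot), \Pi)$; it therefore suffices to bound the order-one distance. Fix a coupling $\Gamma \in \C(P^t(\theta,\cdot), \Pi)$ and let $\nu$ denote the second-marginal restriction of $\Gamma$ to the fibre $\{\theta\} \times \R^d$, that is $\nu(B) = \Gamma(\{\theta\} \times B)$; then $\nu$ has total mass at least $m$ and satisfies $\nu \le \Pi$ setwise. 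Restricting the cost integral to this fibre and applying Assumption~\ref{assumption:wass_condition},
\[
\int c\, d\Gamma \ge C_{0,d} \int \norm{\theta - \w}_1\, d\nu(\w) \ge C_{0,d}\, r\, \nu\big( \{ \w : \norm{\theta - \w}_1 > r \} \big)
\]
for every $r > 0$.

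Next I would bound the local mass. Since $\{\w : \norm{\theta - \w}_1 \le r\}$ is contained in the cube $\{\w : \norm{\theta - \w}_\infty \le r\}$ of Lebesgue volume $(2r)^d$, the density bound gives $\Pi(\{\norm{\theta - \cdot}_1 \le r\}) \le s(2r)^d$, and hence $\nu(\{\norm{\theta - \cdot}_1 > r\}) \ge m - s(2r)^d$. Combining, $\int c\, d\Gamma \ge C_{0,d}\, r\,(m - s(2r)^d)$ for every $r$, and since this holds for all couplings it bounds $\W_c^1$. The final step is to optimize the right-hand side over $r$: the maximizer is $r = \tfrac{1}{2}(m / (s(d+1)))^{1/d}$, at which $s(2r)^d = m/(d+1)$, yielding exactly $C_{0,d}\, d\, [2 s^{1/d}(1+d)^{1+1/d}]^{-1} m^{1+1/d} = C_{d,\pi}\, m^{1 + 1/d}$.

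The main obstacle is the middle step — the passage from the existence of the atom to a transport lower bound. Unlike total variation, where an atom against an atomless $\Pi$ immediately yields a gap of size $m$, the Wasserstein distance only sees the cost of \emph{moving} mass, so the argument must quantify precisely how little of $\Pi$ lies near $\theta$ (via the density bound $s$ and the volume of the metric ball) and trade this against the distance moved. The reduction to $p = 1$ via Jensen is what makes the final radius optimization clean and $p$-independent; attempting the optimization directly for general $p$ would produce a messier, $p$-dependent constant.
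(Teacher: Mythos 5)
Your proof is correct and arrives at exactly the stated constant $C_{d,\pi}$, but it takes a genuinely different route from the paper. The paper works on the dual side: it builds the explicit Lipschitz test function $\phi_{\alpha,\theta}(\w)=\exp(-\alpha\norm{\w-\theta}_1)$, lower-bounds $\int\phi_{\alpha,\theta}\,dP^t(\theta,\cdot)$ by $[1-A(\theta)]^t$ via a recursive one-step inequality, upper-bounds $\int\phi_{\alpha,\theta}\,d\Pi$ by $s2^d\alpha^{-d}$, invokes the Kantorovich--Rubinstein theorem, and optimizes over the scale $\alpha$; Hölder's inequality then handles general $p$. You instead work on the primal (coupling) side: you isolate the rejection atom $P^t(\theta,\{\theta\})\ge[1-A(\theta)]^t$, observe that the fibre measure $\nu(\cdot)=\Gamma(\{\theta\}\times\cdot)$ is dominated by $\Pi$ and so can place at most $s(2r)^d$ of its mass within $\norm{\cdot}_1$-distance $r$ of $\theta$, and optimize over the radius $r$; Jensen handles general $p$. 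The two optimizations are formally dual ($\alpha\sim 1/r$) and produce the identical constant. Your argument is arguably more elementary and self-contained, since it needs only the definition of a coupling rather than Kantorovich--Rubinstein duality (which carries its own regularity hypotheses on the space and cost); what the paper's test-function machinery buys is direct reusability, as the same recursive bound on $\int\phi\,dP^t$ is recycled verbatim for the random-scan extension in Theorem~\ref{thm:rs_was_lb} — though your atom bound would transfer there just as easily. All the individual steps you state (the induction $P^t(\theta,\{\theta\})\ge P(\theta,\{\theta\})P^{t-1}(\theta,\{\theta\})$, the setwise domination $\nu\le\Pi$, the cube containment giving volume $(2r)^d$, and the evaluation $s(2r)^d=m/(d+1)$ at the optimal radius) check out.
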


\begin{corollary}
Let $P_1$ and $P_2$ be ARB kernels with acceptance functions $a_1$ and $a_2$, respectively.   If assumptions \ref{assumption:ub_pi} and \ref{assumption:wass_condition} hold and $a_{1}(\theta, \theta') \ge a_{2}(\theta, \theta')$ for all $\theta, \theta'$, then
\[
\W_{c}^p(P_2^t(\theta, \cdot), \Pi) \ge C_{d, \pi} [ 1 - A_2(\theta) ]^{t \left( 1 + \frac{1}{d} \right)} \ge C_{d, \pi} [ 1 - A_1(\theta) ]^{t \left( 1 + \frac{1}{d} \right)}.
\]
\end{corollary}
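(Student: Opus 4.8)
The plan is to reduce the corollary to Theorem~\ref{thm:wasserstein_lb} together with a short monotonicity argument, exactly mirroring the proof of Corollary~\ref{cor:compare} in the total variation setting. The first inequality is immediate: since $P_2$ is an ARB kernel and Assumptions~\ref{assumption:ub_pi} and~\ref{assumption:wass_condition} are assumed to hold, I would simply invoke Theorem~\ref{thm:wasserstein_lb} applied to $P_2$, which gives
\[
\W_{c}^p(P_2^t(\theta, \cdot), \Pi) \ge C_{d,\pi}[1 - A_2(\theta)]^{t(1+1/d)}.
\]
The key structural observation is that the constant $C_{d,\pi} = C_{0,d} d[2 s^{1/d}(1+d)^{1+1/d}]^{-1}$ depends only on the dimension, on the cost metric through $C_{0,d}$, and on the target through $s = \sup_{\theta} \pi(\theta)$; in particular it is identical for $P_1$ and $P_2$, and so is the exponent $t(1+1/d)$.

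For the second inequality, I would first establish the pointwise-to-integral comparison $A_1(\theta) \ge A_2(\theta)$. Because $P_1$ and $P_2$ share the same proposal density $q(\theta, \cdot)$ and differ only in their acceptance functions, integrating the hypothesis $a_1(\theta, \theta') \ge a_2(\theta, \theta')$ against the nonnegative density $q(\theta, \theta')$ yields
\[
A_1(\theta) = \int a_1(\theta, \theta') q(\theta, \theta')\, d\lambda(\theta') \ge \int a_2(\theta, \theta') q(\theta, \theta')\, d\lambda(\theta') = A_2(\theta).
\]

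I would then finish with monotonicity. Since each acceptance probability lies in $[0,1]$, the bound $A_1(\theta) \ge A_2(\theta)$ gives $0 \le 1 - A_1(\theta) \le 1 - A_2(\theta) \le 1$. The exponent $t(1+1/d)$ is strictly positive, so the map $x \mapsto x^{t(1+1/d)}$ is nondecreasing on $[0,1]$, whence $[1-A_2(\theta)]^{t(1+1/d)} \ge [1-A_1(\theta)]^{t(1+1/d)}$. Multiplying through by the common positive constant $C_{d,\pi}$ gives the second inequality and completes the chain.

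There is no genuine obstacle here, as the quantitative content is entirely inherited from Theorem~\ref{thm:wasserstein_lb}. The only points requiring care are the implicit requirement that the two kernels use a common proposal, so that the pointwise inequality on acceptance functions transfers to the integrated acceptance probabilities $A_1(\theta) \ge A_2(\theta)$, and the observation that the prefactor $C_{d,\pi}$ and the exponent $t(1+1/d)$ coincide on both sides, so that the comparison collapses to comparing the bases $1 - A_2(\theta)$ and $1 - A_1(\theta)$ precisely as in the total variation analogue.
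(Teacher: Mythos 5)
Your proof is correct and follows exactly the argument the paper intends (the paper omits the proof as immediate from Theorem~\ref{thm:wasserstein_lb} and the monotonicity of $x \mapsto x^{t(1+1/d)}$ on $[0,1]$, just as in Corollary~\ref{cor:compare}). You are also right to flag that the step $a_1 \ge a_2 \Rightarrow A_1 \ge A_2$ implicitly requires the two kernels to share a common proposal density, a hypothesis the paper leaves tacit but uses in its examples.
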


\subsection{Geometric ergodicity}

Say $P$ is $(c, p, \rho, M)$-geometrically ergodic if there is a $\rho \in (0, 1)$ and a function $\theta \mapsto M(\theta) \in (0, \infty)$ such that for every $\theta \in \Theta$ and every $t \in \Z_+$,
\[
\W_{c}^p\left( P^t(\theta, \cdot), \Pi \right)
\le M(\theta) \rho^t.
\]
Compared to the definition of geometric ergodicity in total variation, this requires a metric cost function $c$ for the Wasserstein distance of order $p$. 

\begin{theorem}
\label{thm:wass_lb_rate}
Let assumptions~\ref{assumption:ub_pi} and~\ref{assumption:wass_condition} hold.
If $P$ is $(c, p, \rho, M)$-geometrically ergodic, then
\[
\rho^{\frac{d}{d + 1}} \ge 1 - 
\inf_{\theta \in \Theta
}A(\theta).
\]
\end{theorem}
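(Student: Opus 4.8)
The plan is to play the geometric-ergodicity upper bound against the Wasserstein lower bound of Theorem~\ref{thm:wasserstein_lb}, both of which hold simultaneously for every iteration time and every fixed starting point. First I would fix an arbitrary $\theta \in \Theta$. By $(c,p,\rho,M)$-geometric ergodicity, $\W_c^p(P^t(\theta,\cdot),\Pi) \le M(\theta)\rho^t$ for all $t \in \Z_+$, while Theorem~\ref{thm:wasserstein_lb} gives $\W_c^p(P^t(\theta,\cdot),\Pi) \ge C_{d,\pi}[1-A(\theta)]^{t(1+\frac{1}{d})}$. Chaining these two inequalities yields
\[
C_{d,\pi}\,[1-A(\theta)]^{t\left(1+\frac{1}{d}\right)} \le M(\theta)\,\rho^t
\]
for every $t \in \Z_+$.

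Next I would take $t$-th roots of both sides and send $t \to \infty$. Since $C_{d,\pi} \in (0,\infty)$ and $M(\theta) \in (0,\infty)$ are fixed positive constants, $C_{d,\pi}^{1/t} \to 1$ and $M(\theta)^{1/t} \to 1$, so in the limit
\[
[1-A(\theta)]^{1+\frac{1}{d}} \le \rho.
\]
Because $1 + \frac{1}{d} = \frac{d+1}{d}$ and $1-A(\theta) \in [0,1]$, raising both sides to the power $\frac{d}{d+1}$ (a monotone map on $[0,\infty)$ that cancels the exponent on the left) gives $1 - A(\theta) \le \rho^{\frac{d}{d+1}}$.

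Finally, since $\theta \in \Theta$ was arbitrary, I would take the supremum over $\theta$ and use $\sup_{\theta}[1-A(\theta)] = 1 - \inf_{\theta}A(\theta)$ to conclude $1 - \inf_{\theta \in \Theta} A(\theta) \le \rho^{\frac{d}{d+1}}$, which is the claim. The argument is essentially routine once Theorem~\ref{thm:wasserstein_lb} is in hand; the only points needing care are confirming that the fixed constants $C_{d,\pi}$ and $M(\theta)$ are strictly positive and finite, so that their $t$-th roots converge to $1$ in the limit, and noting the boundary case $A(\theta)=1$, where both the lower bound and the target inequality hold trivially. The genuine work has already been absorbed into the Wasserstein lower bound, and it is precisely the exponent $1+\frac{1}{d}$ appearing there that produces the $\frac{d}{d+1}$ power on $\rho$ here.
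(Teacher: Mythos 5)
Your proof is correct and follows exactly the route the paper intends: the paper proves Theorem~\ref{thm:lb_rate} by chaining the total variation lower bound with the geometric ergodicity upper bound and sending $t\to\infty$ after taking $t$-th roots, and Theorem~\ref{thm:wass_lb_rate} is the verbatim analogue with Theorem~\ref{thm:wasserstein_lb} supplying the lower bound and the extra exponent $1+\tfrac{1}{d}$ producing the power $\tfrac{d}{d+1}$ on $\rho$. Your attention to the positivity and finiteness of $C_{d,\pi}$ and $M(\theta)$, and to the trivial case $A(\theta)=1$, covers the only points requiring care.
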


Recall that if the rejection probability cannot be bounded below one, that is, if the acceptance probability satisfies $\inf_{\theta \in \R^d} A(\theta) = 0$, then a Metropolis-Hastings algorithm fails to be geometrically ergodic \citep[Proposition 5.1]{ Roberts1996geo}. 
Theorem~\ref{thm:wass_lb_rate}  yields the same in these Wasserstein distances for ARB kernels.
There are well-known examples where the condition on this happens 
\cite[Example 1]{Mengersen1996}, \cite[][Proposition 5.2]{Roberts1996geo}.

\begin{corollary}
Let assumptions~\ref{assumption:ub_pi} and~\ref{assumption:wass_condition} hold.
If $\inf_{\theta \in \Theta} A(\theta) = 0$, then $P$ cannot be $(c, p, \rho, M)$-geometrically ergodic.
\end{corollary}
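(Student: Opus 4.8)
The plan is to argue by contradiction and invoke Theorem~\ref{thm:wass_lb_rate} directly, since the corollary is essentially the boundary case $\inf_{\theta \in \Theta} A(\theta) = 0$ of that theorem. First I would suppose, for contradiction, that $P$ is $(c, p, \rho, M)$-geometrically ergodic for some $\rho \in (0,1)$ and some function $\theta \mapsto M(\theta) \in (0,\infty)$. Because assumptions~\ref{assumption:ub_pi} and~\ref{assumption:wass_condition} are in force, Theorem~\ref{thm:wass_lb_rate} applies and delivers $\rho^{d/(d+1)} \ge 1 - \inf_{\theta \in \Theta} A(\theta)$.

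Next I would substitute the hypothesis $\inf_{\theta \in \Theta} A(\theta) = 0$ into this inequality to obtain $\rho^{d/(d+1)} \ge 1$. Since $d \in \Z_+$, the exponent $d/(d+1)$ is a fixed number in $(0,1)$, so $\rho \in (0,1)$ forces $\rho^{d/(d+1)} < 1$ by monotonicity of $x \mapsto x^{d/(d+1)}$ on $(0,\infty)$. This contradicts the inequality just derived, and therefore no such $\rho \in (0,1)$ can exist; that is, $P$ cannot be $(c, p, \rho, M)$-geometrically ergodic.

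I do not anticipate a genuine obstacle here, as all of the analytic work has already been carried out in establishing Theorem~\ref{thm:wass_lb_rate}; the corollary merely records the qualitative consequence that a vanishing infimum of the acceptance probability rules out Wasserstein geometric ergodicity, mirroring the classical total variation statement of \citep[Proposition 5.1]{Roberts1996geo}. The only point warranting a moment's care is the elementary observation that raising $\rho \in (0,1)$ to a positive power strictly less than one still yields a value strictly below one, which produces the desired contradiction.
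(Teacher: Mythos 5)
Your proof is correct and follows exactly the paper's own argument: assume geometric ergodicity for contradiction, apply Theorem~\ref{thm:wass_lb_rate} with $\inf_{\theta \in \Theta} A(\theta) = 0$ to get $\rho^{d/(d+1)} \ge 1$, and contradict $\rho \in (0,1)$. The only difference is that you spell out the elementary monotonicity step that the paper leaves implicit.
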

\begin{proof}
If it were $(c, p, \rho, M)$-geometrically ergodic, then, by Theorem~\ref{thm:wass_lb_rate}, $\rho^{\frac{d}{d + 1}} \ge 1$, which is a contradiction.
\end{proof}

Analogous comparisons can be made to conductance methods as in Section~\ref{sec:compare_conductance}. 
In particular, the lower bounds on the spectral gap can also lower bound the exponential convergence rate in the Wasserstein distance. The current sufficient conditions for a spectral gap using the Wasserstein distance are extended to an equivalence condition \citep[Proposition 2.8]{Hairer2014}.
Note that the assumptions in the following proposition are self-contained and hold for reversible Markov kernels under more general conditions.

\begin{proposition}\label{thm:equiv}
Let $\Pi$ be any probability measure on a metric space $(\Omega, d, \B(\Omega))$, and let $P$ be any Markov kernel on $(\Omega, d, \B(\Omega))$ reversible with respect to $\Pi$.
For a $\rho \in (0, 1)$, the following are equivalent if they hold for every probability measure $\mu$ on $\Omega$ with $d\mu / d\Pi \in L^2(\Pi)$:

\noindent (i) The Markov operator $P$ has a spectral gap at least $1 - \rho$.

\noindent (ii) There is a $C_\mu \in (0, \infty)$ depending on $\mu$ such that for every $t \in \Z_+$,
\[
\W_{d \wedge 1}\left( \mu P^t, \Pi \right)
\le C_{\mu} \rho^t.
\]
\end{proposition}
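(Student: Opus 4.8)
The plan is to prove the two implications separately. The implication (i) $\Rightarrow$ (ii) will be reduced to the $L^2$ theory together with the elementary domination of $\W_{d \wedge 1}$ by total variation, while (ii) $\Rightarrow$ (i), which is the genuinely new content, will be handled by exploiting self-adjointness to rewrite an $L^2$ norm as a pairing against a bounded Lipschitz test function.

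For (i) $\Rightarrow$ (ii), write $g = d\mu/d\Pi \in L^2(\Pi)$ and note that reversibility makes $P$ self-adjoint on $L^2(\Pi)$, so $\mu P^t$ has density $P^t g$ and $\mu P^t - \Pi$ has density $P^t(g-1)$ with $g - 1 \in L^2_0(\Pi) := \{h \in L^2(\Pi) : \int h \, d\Pi = 0\}$. A spectral gap of at least $1 - \rho$ means the operator norm of $P$ on $L^2_0(\Pi)$ is at most $\rho$, so $\norm{P^t(g-1)}_{L^2(\Pi)} \le \rho^t \norm{g - 1}_{L^2(\Pi)}$. Since $\Pi$ is a probability measure, Cauchy--Schwarz gives $\norm{P^t(g-1)}_{L^1(\Pi)} \le \norm{P^t(g-1)}_{L^2(\Pi)}$, whence $\norm{\mu P^t - \Pi}_{\mathrm{TV}} \le \tfrac12 \norm{g-1}_{L^2(\Pi)}\rho^t$. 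Finally, a maximal coupling for the total variation distance shows $\W_{d \wedge 1}(\alpha, \beta) \le \norm{\alpha - \beta}_{\mathrm{TV}}$ for any $\alpha, \beta$, because the cost $d \wedge 1$ is bounded by the indicator $\mathbf{1}[\theta \neq \w]$; taking $C_\mu = \tfrac12 \norm{g-1}_{L^2(\Pi)}$ yields (ii).

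For (ii) $\Rightarrow$ (i), the idea is to control the spectrum of the self-adjoint operator $P$ on $L^2_0(\Pi)$ using only Lipschitz test measures. Let $f \in L^2_0(\Pi)$ be bounded and Lipschitz with $\mathrm{Lip}_{d \wedge 1}(f) \le 1$, where $\mathrm{Lip}_{d \wedge 1}(f)$ is the Lipschitz constant with respect to the bounded metric $d \wedge 1$; such an $f$ has oscillation at most $1$, so being mean-zero it satisfies $\norm{f}_\infty \le 1$ and $\mu_f := (1 + f)\Pi$ is a probability measure with density in $L^2(\Pi)$. As above, $\mu_f P^{2t} - \Pi$ has density $P^{2t} f$, so self-adjointness gives $\norm{P^t f}_{L^2(\Pi)}^2 = \langle f, P^{2t} f\rangle_{L^2(\Pi)} = \int f \, d(\mu_f P^{2t} - \Pi)$. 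The Kantorovich--Rubinstein coupling bound then yields $\int f \, d(\mu_f P^{2t} - \Pi) \le \mathrm{Lip}_{d \wedge 1}(f)\, \W_{d \wedge 1}(\mu_f P^{2t}, \Pi) \le C_{\mu_f}\rho^{2t}$ by (ii), so $\norm{P^t f}_{L^2(\Pi)} \le \sqrt{C_{\mu_f}}\,\rho^t$ for every such $f$, and by scaling for every bounded Lipschitz mean-zero function.

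To finish, I would promote this bound from bounded Lipschitz functions to all of $L^2_0(\Pi)$. Writing the spectral measure of $P$ restricted to $L^2_0(\Pi)$ as $\nu_f$ for a fixed $f$, one has $\norm{P^t f}_{L^2(\Pi)}^2 = \int \lambda^{2t}\,d\nu_f(\lambda)$, so the displayed geometric bound forces $\nu_f$ to place no mass outside $[-\rho, \rho]$. Since bounded Lipschitz functions are dense in $L^2(\Pi)$ for a finite Borel measure on a metric space (one approximates indicators of closed sets $F$ by the Lipschitz functions $(1 - n\, d(\cdot, F))_+$ and uses dominated convergence), their mean-zero versions are dense in $L^2_0(\Pi)$; hence if the spectral projection of $P$ onto $\{|\lambda| > \rho\}$ were nonzero, some bounded Lipschitz mean-zero $f$ would have $\nu_f(\{|\lambda| > \rho\}) > 0$, a contradiction. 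Thus the spectrum of $P$ on $L^2_0(\Pi)$ lies in $[-\rho, \rho]$, giving a spectral gap of at least $1 - \rho$. The main obstacle is exactly this last step: the hypothesis supplies control only against Lipschitz test functions, and the crux is that self-adjointness converts the $L^2$ decay of $P^t f$ into such a pairing, while the density of bounded Lipschitz functions lets the spectral theorem upgrade the conclusion to the full mean-zero subspace.
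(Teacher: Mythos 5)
Your proof is correct and takes essentially the same route as the paper's: the heart of both arguments is that reversibility converts $\norm{P^t f}_{L^2(\Pi)}^2$ into the pairing $\int f\, d(\mu_f P^{2t} - \Pi)$ against a bounded Lipschitz test function, which hypothesis (ii) controls geometrically, after which a spectral-theorem argument removes the $f$-dependent constant and density of Lipschitz functions extends the bound to all of $L^2_0(\Pi)$. The only differences are organizational: you perturb $\Pi$ by $(1+f)$ where the paper normalizes a non-negative Lipschitz function to a probability density and splits general functions into positive and negative parts, and you conclude via spectral projections directly where the paper invokes a lemma of Hairer and Mattingly and an intermediate approximation through lower semicontinuous functions.
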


For reversible Metropolis-Hastings satisfying the conditions of Proposition~\ref{prop:conductance_accept_ub}, if Proposition~\ref{thm:equiv} (ii) holds, then
\[
1 - \rho 
\le \inf_{\theta \in \Theta} A(\theta).
\]
Under these conditions, an application of Hölder's inequality extends this lower bound to Wasserstein distances of order $p$.
For probability measures $\mu$ with $d\mu / d\Pi \in L^2(\Pi)$, if there is a $C_\mu \in (0, \infty)$ such that for every $t \in \Z_+$,  
\[
\W_{d}^p\left( \mu P^t, \Pi \right)
\le C_{\mu} \rho^t,
\]
then $\rho \ge 1 - \inf_{\theta} A(\theta)$ as well.

\section{Applications using Metropolis-Hastings}
\label{sec:applications}

\subsection{RWMH with log-concave targets}
\label{sec:RWM_sc}
Let $f : \Theta \to \R$ and $\pi = Z^{-1} \exp(-f)$ where
$Z$ is the normalizing constant.  Consider sampling from $\pi$ with
RWMH using a $d$-dimensional Gaussian proposal centered at the current
state with variance $h I_d$.  In this case, for all
$\theta \in \Theta$,
\[
A(\theta) = \int \left[ \frac{\pi(\theta')}{\pi(\theta)} \wedge 1 \right] q(\theta, \theta') d \theta' \le \frac{1}{(2\pi h)^{d/2}}\int e^{\left( f(\theta) - f(\theta') \right)}  e^{- \frac{1}{2h} \norm{\theta' - \theta}^{2}_{2}} d \theta'.
\]
 Consider the setting where
$f$ satisfies a strong-convexity requirement.  That is, if $S$ is a convex
set and $\xi \in (0, \infty)$, the function $f$ is $\xi^{-1}$-strongly
convex if $\theta \mapsto f(\theta) - \norm{\theta}_2^2 /2\xi$ is convex on $S$.  Also,
let $\partial f(\theta)$ denote the set of subgradients of $f$ at the
point $\theta$.

\begin{proposition}
 \label{prop:sc_rwm}
 Suppose $\Theta$ is convex and that, for
 $\xi \in (0, \infty)$, $f$ is $\xi^{-1}$-strongly convex on
 $\Theta$.  If $\theta_0 \in \Theta$, then, for any
 $v \in \partial f(\theta_{0})$,
 \[
   A(\theta_{0}) \le \frac{1}{(1 + h/\xi )^{d/2}}
   \exp\left\{\frac{h}{2} \left( \frac{1}{1 +
           h/\xi}\right) \|v\|^2_2  \right\}.
 \]
\end{proposition}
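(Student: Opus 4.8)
The plan is to start from the upper bound on $A(\theta_0)$ already recorded in the text,
\[
A(\theta_0) \le \frac{1}{(2\pi h)^{d/2}} \int e^{f(\theta_0) - f(\theta')}\, e^{-\frac{1}{2h}\norm{\theta' - \theta_0}_2^2}\, d\theta',
\]
and to control the factor $e^{f(\theta_0) - f(\theta')}$ using strong convexity. Since $f$ is $\xi^{-1}$-strongly convex on the convex set $\Theta$, for any $v \in \partial f(\theta_0)$ the strong-convexity subgradient inequality gives, for all $\theta' \in \Theta$,
\[
f(\theta') - f(\theta_0) \ge \langle v,\, \theta' - \theta_0\rangle + \frac{1}{2\xi}\norm{\theta' - \theta_0}_2^2,
\]
equivalently $f(\theta_0) - f(\theta') \le -\langle v,\, \theta' - \theta_0\rangle - \tfrac{1}{2\xi}\norm{\theta' - \theta_0}_2^2$.

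Next I would substitute this bound into the integrand. Because the integrand is nonnegative, I can enlarge the domain of integration from $\Theta$ to all of $\R^d$ and, after the change of variables $u = \theta' - \theta_0$, obtain
\[
A(\theta_0) \le \frac{1}{(2\pi h)^{d/2}} \int_{\R^d} \exp\left(-\langle v, u\rangle - \frac{1}{2}\left(\frac{1}{h} + \frac{1}{\xi}\right)\norm{u}_2^2\right) du.
\]
Writing $\alpha = h^{-1} + \xi^{-1}$, this is a Gaussian integral with a linear term, which I would evaluate by completing the square, $-\langle v, u\rangle - \tfrac{\alpha}{2}\norm{u}_2^2 = -\tfrac{\alpha}{2}\norm{u + v/\alpha}_2^2 + \tfrac{1}{2\alpha}\norm{v}_2^2$. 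The shifted Gaussian integrates to $(2\pi/\alpha)^{d/2}$, so
\[
A(\theta_0) \le \frac{1}{(2\pi h)^{d/2}} \left(\frac{2\pi}{\alpha}\right)^{d/2} \exp\left(\frac{\norm{v}_2^2}{2\alpha}\right) = \frac{1}{(h\alpha)^{d/2}} \exp\left(\frac{\norm{v}_2^2}{2\alpha}\right).
\]

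Finally I would simplify the constants: since $h\alpha = 1 + h/\xi$ and $1/(2\alpha) = (h/2)(1 + h/\xi)^{-1}$, the right-hand side is exactly the claimed bound. There is no serious obstacle here; the only points requiring a little care are applying the strong-convexity inequality with the correct constant $\xi^{-1}$ and in the correct direction, and justifying the enlargement of the integration domain. On the latter, the pointwise bound on $f(\theta_0) - f(\theta')$ is only guaranteed on $\Theta$, but replacing the true integrand on $\R^d \setminus \Theta$ by the nonnegative quadratic upper bound in the exponent only increases the integral, so the inequality is preserved.
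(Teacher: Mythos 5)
Your proof is correct and follows essentially the same route as the paper: apply the strong-convexity subgradient inequality to bound $f(\theta_0)-f(\theta')$ and then complete the square in the resulting Gaussian integral, which the paper leaves as a ``routine calculation'' and you carry out explicitly. You also correctly write the quadratic term with a negative sign after substitution (the paper's displayed intermediate inequality has a sign typo there, though its final bound agrees with yours), and your remark on enlarging the domain of integration from $\Theta$ to $\R^d$ is a sound, if minor, point of added care.
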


There are at least two settings where Proposition~\ref{prop:sc_rwm} yields specific lower bounds.

\begin{corollary}
\label{cor:rwm_max}
  Suppose $\Theta$ is convex and that, for
  $\xi \in (0, \infty)$, $f$ is $\xi^{-1}$-strongly convex on
  $\Theta$. If $\theta^*$ is the point which maximizes $\pi$,
  then 
\[
A(\theta^*) \le (1 + h/\xi )^{-d/2}
\]
 and hence
 \[
\norm{ P^t(\theta^*, \cdot) - \Pi }_{\text{TV}} \ge \left( 1 - \frac{1}{(1 + h/\xi)^{d/2}} \right)^t .
 \]
\end{corollary}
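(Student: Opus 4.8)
The plan is to derive both claims as direct consequences of Proposition~\ref{prop:sc_rwm} and the total variation lower bound of Theorem~\ref{thm:lb}, with the only real content being the observation that at the maximizer of $\pi$ the subgradient appearing in Proposition~\ref{prop:sc_rwm} may be taken to vanish.

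First I would translate the maximization of $\pi$ into a statement about $f$. Since $\pi = Z^{-1}\exp(-f)$ is a strictly decreasing transformation of $f$, the point $\theta^*$ maximizes $\pi$ over $\Theta$ precisely when it minimizes the convex function $f$ over $\Theta$. Invoking the first-order optimality condition for convex minimization, a minimizer of $f$ satisfies $0 \in \partial f(\theta^*)$, so I may choose the subgradient $v = 0$ in Proposition~\ref{prop:sc_rwm}. With $v = 0$, the exponential factor $\exp\{(h/2)(1 + h/\xi)^{-1}\norm{v}_2^2\}$ collapses to $\exp(0) = 1$, and Proposition~\ref{prop:sc_rwm} immediately yields $A(\theta^*) \le (1 + h/\xi)^{-d/2}$, which is the first claim.

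For the second claim I would apply Theorem~\ref{thm:lb} at $\theta = \theta^*$ to obtain $\norm{P^t(\theta^*, \cdot) - \Pi}_{\text{TV}} \ge [1 - A(\theta^*)]^t$. Because $A(\theta^*) \le (1 + h/\xi)^{-d/2} \le 1$, both $1 - A(\theta^*)$ and $1 - (1 + h/\xi)^{-d/2}$ lie in $[0,1]$ with the former no smaller than the latter; since $x \mapsto x^t$ is nondecreasing on $[0, \infty)$ for $t \in \Z_+$, raising to the power $t$ preserves the inequality and produces the stated bound. The remaining steps are purely substitution and monotonicity, so no serious calculation is involved.

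The single delicate point is the optimality step: asserting $0 \in \partial f(\theta^*)$ presumes that $\theta^*$ is an interior minimizer of $f$ (in particular this is automatic when $\Theta = \R^d$). If $\theta^*$ were to lie on the boundary of a proper convex $\Theta$, the optimality condition would only guarantee a supporting, possibly nonzero, boundary subgradient, and the clean factor of $1$ in the exponential could be lost. I therefore expect justifying the interior optimality condition, and hence the choice $v = 0$, to be the main (if minor) obstacle; everything downstream follows mechanically.
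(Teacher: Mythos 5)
Your argument is exactly the paper's: the proof given there reads, in full, ``This is immediate from Proposition~\ref{prop:sc_rwm} with the observation that $0 \in \partial f(\theta^*)$,'' followed by the same application of Theorem~\ref{thm:lb}. The boundary caveat you raise is real but applies equally to the paper's own one-line proof (and is harmless in practice, since for a strongly convex $f$ minimized over a convex $\Theta$ one still has $f(\theta)-f(\theta^*)\ge \norm{\theta-\theta^*}_2^2/(2\xi)$ for all $\theta\in\Theta$, which is all the proof of Proposition~\ref{prop:sc_rwm} actually uses).
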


\begin{proof}
This is immediate from Proposition~\ref{prop:sc_rwm} with the observation that $0 \in \partial f (\theta^*)$.
\end{proof}

\begin{corollary}
\label{cor:sc_rwm}
Suppose $\Theta$ is convex and that, for $\xi \in (0, \infty)$, $f$ is $\xi^{-1}$-strongly convex on $\Theta$. If the RWMH kernel is $(\rho,M)$-geometrically ergodic, then
\[
\rho \ge 1 - (1 + h/\xi )^{-d/2}.
\]
\end{corollary}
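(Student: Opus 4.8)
The plan is to combine the general rate lower bound of Theorem~\ref{thm:lb_rate} with the mode-based acceptance bound already obtained in Corollary~\ref{cor:rwm_max}, so that the statement becomes essentially a one-line deduction.

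First I would invoke Theorem~\ref{thm:lb_rate}: since the RWMH kernel is assumed to be $(\rho, M)$-geometrically ergodic, it yields $\rho \ge 1 - \inf_{\theta \in \Theta} A(\theta)$. The entire task therefore reduces to producing a single upper bound on the infimum of the acceptance probability over $\Theta$.

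Next, for any fixed $\theta^* \in \Theta$ one trivially has $\inf_{\theta \in \Theta} A(\theta) \le A(\theta^*)$. I would take $\theta^*$ to be the point maximizing $\pi$ on $\Theta$, equivalently the minimizer of $f$; because $f$ is $\xi^{-1}$-strongly convex on the convex set $\Theta$ it is coercive, so this point exists and satisfies $0 \in \partial f(\theta^*)$. Applying Corollary~\ref{cor:rwm_max} (which is just the $v = 0$ specialization of Proposition~\ref{prop:sc_rwm}) then gives $A(\theta^*) \le (1 + h/\xi)^{-d/2}$. Chaining these inequalities, $\rho \ge 1 - \inf_{\theta \in \Theta} A(\theta) \ge 1 - A(\theta^*) \ge 1 - (1 + h/\xi)^{-d/2}$, which is the claim.

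I do not anticipate a genuine obstacle, as both ingredients are already in hand; the only point needing a word of care is the existence of the maximizer $\theta^*$ in $\Theta$, guaranteed by coercivity of the strongly convex $f$ (and its membership in $\Theta$ when $\Theta$ is closed). Should one wish to avoid asserting existence outright, one can instead argue along a minimizing sequence: for convex $f$ the smallest subgradient norm tends to zero, so the exponential factor in Proposition~\ref{prop:sc_rwm} tends to one and drives $\inf_{\theta \in \Theta} A(\theta)$ down to $(1 + h/\xi)^{-d/2}$, yielding the same conclusion.
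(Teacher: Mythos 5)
Your proof is correct and is exactly the paper's argument: the paper's own proof reads ``immediate from Theorem~\ref{thm:lb_rate} and Corollary~\ref{cor:rwm_max},'' which is precisely your chain $\rho \ge 1 - \inf_{\theta}A(\theta) \ge 1 - A(\theta^*) \ge 1 - (1+h/\xi)^{-d/2}$. Your additional remark on the existence of the maximizer (or the minimizing-sequence fallback) is a reasonable extra precaution that the paper leaves implicit.
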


\begin{proof}
This is immediate from Theorem~\ref{thm:lb_rate} and Corollary~\ref{cor:rwm_max}.
\end{proof}

For any fixed value of $h$, the lower bound will increase exponentially as $d \to \infty$.  However, if $h \propto 1/d$, then this can be avoided, an observation that agrees with the optimal scaling guidelines \citep{robe:weak:1997}, but under much weaker conditions on $\Pi$.

An adversarial example using a Gaussian target showed the spectral gap for the RWMH algorithm tends to $0$ polynomially fast with the dimension \citep{Hairer2014}. An adversarial example in a non-toy example where the convergence rate can tend to $1$ exponentially in the dimension follows.

\begin{example} 
\label{example:RWM}
Suppose, for $i = 1, \ldots, n$, $X_i \in \R^d$ and
\begin{align*}
&\beta \sim N(0, \sigma_{\text{prior}}^2 I_d)
\\
&Y_i|X_i, \beta \stackrel{ind}{\sim} \text{Bern}\left( \left( 1 + \exp\left( -\beta^T X_i \right) \right)^{-1} \right).
\end{align*}
Consider RWMH with Gaussian proposal centered at the current state, scale  $h I_d$, and having the posterior distribution as its invariant distribution.  The RWMH is $(\rho, M)$-geometrically ergodic for some $(\rho, M)$ \citep{Vats2019}. The negative log-likelihood is convex and applying Corollary~\ref{cor:sc_rwm}, obtain
\[
\rho \ge 1 - ( h /\sigma^2_{\text{prior}} + 1 )^{-d/2}.
\]
\end{example}

\subsection{Bayesian logistic regression with Zellner's g-prior}
\label{sec:logistic_Zellner}

A specific application is considered where both $d$ and $n$ are allowed to increase and the data-generating mechanism for Bayesian logistic regression with Zellner's g-prior \citep{Zellner1986} need not be correct. Let $(Y_i, X_i)_{i = 1}^n$ with $Y_i$ taking values in $\{0, 1\}$ and $X_i$ taking values in $\R^d$. Let $g \in (0, \infty)$ be a fixed constant and set $X = (X_1, \ldots, X_n)^T$. 
If $X^T X$ is positive-definite and $s(\cdot)$ denotes the sigmoid function, the posterior density is characterized by
\[
\pi_n(\beta)
\propto \prod_{i = 1}^n s\left( \beta^T X_i \right)^{Y_i} \left( 1 -  s\left( \beta^T X_i \right) \right)^{1 - Y_i} \exp\left( - \frac{1}{2 g} \beta^T X^T X \beta \right).
\]

Assume $(X_{i, j})_{i, j}$ are independent and identically distributed random variables with zero mean, unit variance, and a finite fourth moment.
Consider RWMH with a Gaussian proposal centered at the current state and scale $h I_d$ and invariant density $\pi_n$.  If $d_n/n \to \gamma \in (0, 1)$ as $n \to \infty$, the following result implies that the convergence rate will exponentially tend to $1$ with $n$ unless $h \le 2 g/[ (1 - \sqrt{\gamma})^2 n d_n)]$.

\begin{proposition}
\label{prop:zellner_example}
Let $\beta_n^*$ denote the point which maximizes $\pi_n$.
If $n \to \infty$ in such a way that $d_n/n \to \gamma \in (0, 1)$, then, with probability 1, for all sufficiently large $n$, the acceptance probability for RWMH satisfies
\[
A(\beta^*_n)
\le \left( \frac{h n (1 - \sqrt{\gamma})^2}{2 g} + 1 \right)^{-d_n/2}.
\]
\end{proposition}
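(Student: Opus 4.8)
The plan is to recognize that the g-prior posterior is strongly log-concave with a strong-convexity constant governed by the prior precision, apply Corollary~\ref{cor:rwm_max} at the maximizer, and thereby reduce the whole problem to a lower bound on the smallest eigenvalue of $X^T X$. First I would write $\pi_n = Z^{-1} \exp(-f_n)$ with
\[
f_n(\beta) = -\sum_{i=1}^n \left[ Y_i \log s(\beta^T X_i) + (1 - Y_i) \log\left( 1 - s(\beta^T X_i) \right) \right] + \frac{1}{2g} \beta^T X^T X \beta,
\]
and compute its Hessian. The logistic log-likelihood contributes $\sum_{i=1}^n s(\beta^T X_i)(1 - s(\beta^T X_i)) X_i X_i^T$, which is positive semidefinite, while the g-prior contributes $g^{-1} X^T X$. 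Hence, for every $\beta$,
\[
\nabla^2 f_n(\beta) \succeq \frac{1}{g} X^T X \succeq \frac{\lambda_{\min}(X^T X)}{g} I_{d_n},
\]
where $\lambda_{\min}$ denotes the smallest eigenvalue. Since $X^T X$ is positive-definite by assumption, this shows $f_n$ is $\xi^{-1}$-strongly convex on the convex set $\Theta = \R^{d_n}$ with $\xi^{-1} = \lambda_{\min}(X^T X)/g$ (in particular the maximizer $\beta_n^*$ exists and is unique).

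With strong convexity in hand, Corollary~\ref{cor:rwm_max} applies directly at $\beta_n^*$ and yields
\[
A(\beta_n^*) \le \left( 1 + \frac{h}{\xi} \right)^{-d_n/2} = \left( 1 + \frac{h \lambda_{\min}(X^T X)}{g} \right)^{-d_n/2}.
\]
Because $x \mapsto (1 + x)^{-d_n/2}$ is decreasing, it now suffices to bound $\lambda_{\min}(X^T X)$ from below by $n(1 - \sqrt{\gamma})^2/2$ for all large $n$, and then substitute into the displayed bound.

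The remaining step, which I expect to be the main obstacle, is the random-matrix input. The matrix $n^{-1} X^T X$ is a sample covariance matrix whose entries are i.i.d. with mean zero, unit variance, and finite fourth moment, and with aspect ratio $d_n/n \to \gamma \in (0,1)$. By the Bai-Yin theorem, under exactly this finite-fourth-moment hypothesis, the smallest eigenvalue of $n^{-1} X^T X$ converges almost surely to the left edge of the Marchenko-Pastur support, namely $(1 - \sqrt{\gamma})^2$, so that $\lambda_{\min}(X^T X)/n \to (1 - \sqrt{\gamma})^2$ almost surely. Since $(1 - \sqrt{\gamma})^2/2 < (1 - \sqrt{\gamma})^2$, almost sure convergence gives, for all sufficiently large $n$,
\[
\lambda_{\min}(X^T X) \ge \frac{n(1 - \sqrt{\gamma})^2}{2},
\]
and plugging this into the bound for $A(\beta_n^*)$ completes the argument. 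The factor $1/2$ is precisely what makes this robust: I need neither a quantitative rate nor a non-asymptotic eigenvalue estimate, only the almost-sure limit together with strict positivity of $(1 - \sqrt{\gamma})^2$, which is why the finite fourth moment assumption, rather than anything stronger, is the essential hypothesis.
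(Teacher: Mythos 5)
Your proof is correct and follows essentially the same route as the paper: strong convexity of the negative log-posterior via the g-prior quadratic term, a Bai--Yin eigenvalue bound $\lambda_{\min}(X^TX) \ge n(1-\sqrt{\gamma})^2/2$ for large $n$ (with the same factor of $1/2$ as slack), and an application of Corollary~\ref{cor:rwm_max} at the maximizer. The only cosmetic difference is that the paper normalizes $f_n$ by $1/n$ before invoking the strong-convexity machinery, which does not change the argument.
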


It is useful to empirically investigate the convergence of the RWMH algorithm in this example for different tuning parameters.  The acceptance probability at the posterior maximum $\beta^*_n$ is estimated using standard Monte Carlo with $10^3$ samples.
Since $\pi_n$ is log-concave, $\beta^*_n$ can be estimated efficiently using gradient descent.
In turn, Theorem~\ref{thm:lb} will be used to estimate the lower bound to the convergence rate.
Artificial data $(Y_i, X_i)_i$ will be considered where $X_i \sim \text{Unif}(-1, 1)$, $g = 10$, and the data are generated  with increasing dimensions $d$ and sample sizes $n = 4 d$, specifically, 
\[
(d, n) \in \{(2, 8), (4, 16), (4, 24), (8, 32), (10, 40), (12, 48), (14, 56) \}.
\]

As a guideline, Corollary~\ref{prop:zellner_example} says to choose $h \le 20/d^2$ at least when $n$ is large enough.
This example does not satisfy the required theoretical assumptions for optimal scaling guidelines \citep{robe:weak:1997} and hence is anticipated to perform poorly here. It is compared to optimal scaling with $h = 2.38^2/d$, a fixed variance parameter $h = .6$, and scaling with $h = 1/ (d n)$ according to Corollary~\ref{prop:zellner_example}.
Repeating the simulation $50$ times with randomly generated data, Figure~\ref{figure:rwmsim_a} displays the estimates to these lower bounds using the average within $1$ standard error.
Figure~\ref{figure:rwmsim_b} plots the log acceptance probability at the target's maximum to compare the speed at which the convergence rate tends to 1.
According to the theory, optimal scaling and fixed parameter choices should behave poorly as the dimension increases which corresponds with the simulation results shown in Figure~\ref{figure:rwmsim}.

\begin{figure}[t]
\centering
\begin{subfigure}{.45\linewidth}
  \centering
  \includegraphics[width=\linewidth]{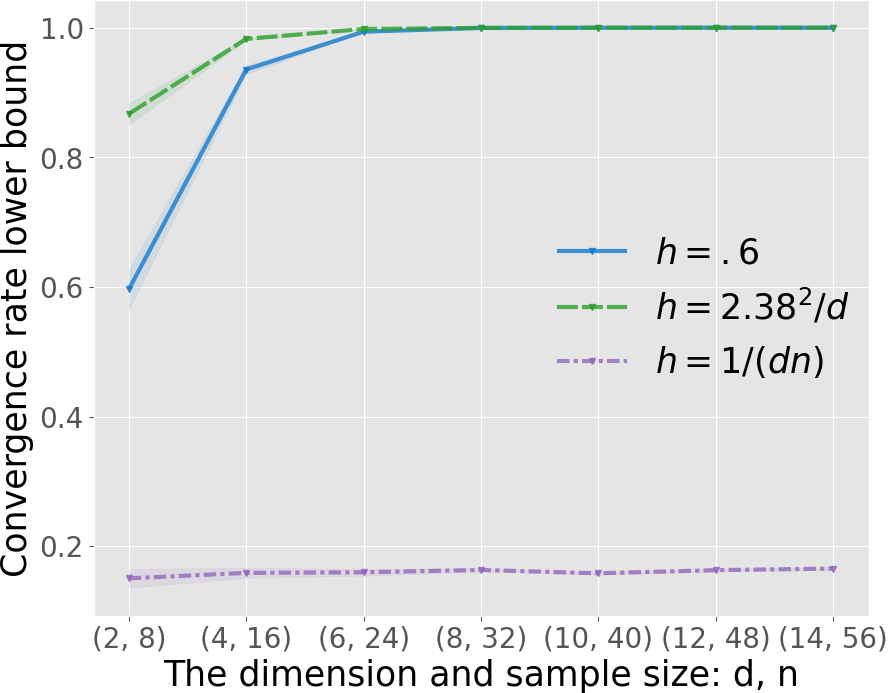}
  \caption{}
  \label{figure:rwmsim_a}
\end{subfigure}
\hspace{.2cm}
\begin{subfigure}{.46\linewidth}
  \centering
  \includegraphics[width=\linewidth]{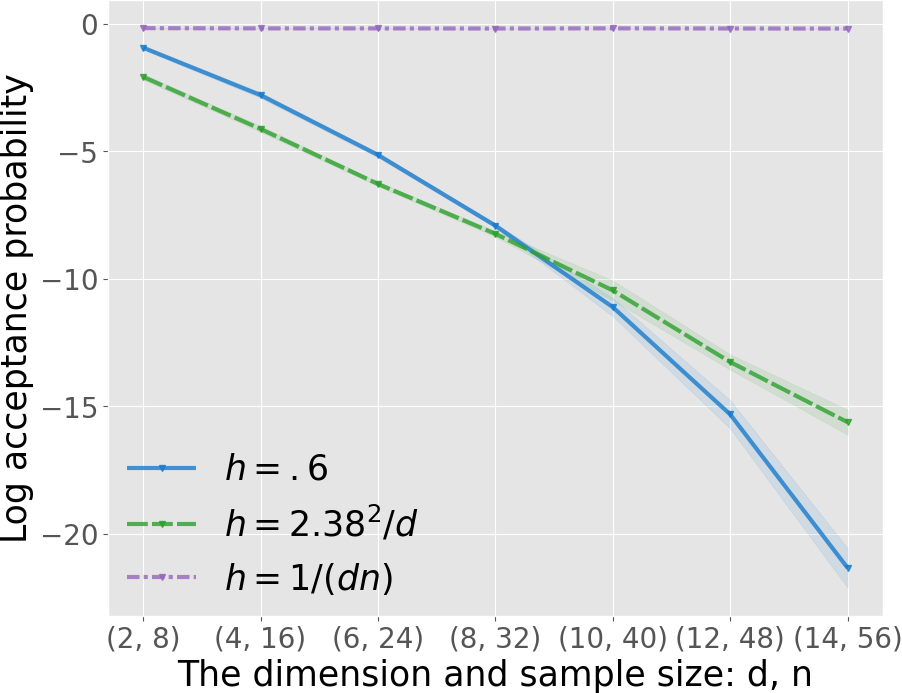}
  \caption{}
  \label{figure:rwmsim_b}
\end{subfigure}
\caption[Lower bounds on the RWMH algorithm for logistic regression with Zellner'g g-prior]{(a) Lower bounds to the rate of convergence of the RWMH algorithm with different scaling values for logistic regression with Zellner's g-prior. (b) Log acceptance probability at the target's maximum. The shaded regions represent $1$ standard deviation from the average after repeated simulations.} \label{figure:rwmsim}
\end{figure}

\subsection{Gaussian proposals for Metropolis-Hastings}
\label{sec:bdd_prop}

Suppose $h \in (0, \infty)$, $C$ is a positive-definite, symmetric matrix, and $\mu : \mathbb{R}^d \to \mathbb{R}^d$. Let the proposal distribution be $N(\mu(\theta), hC)$. For specific choices of $\mu$ and $C$, this is the proposal used in many Metropolis-Hastings algorithms.  For example, if $C=I_d$ and $\mu(\theta) = \theta$, then RWMH results but if $C$ is not the identity, then a Riemannian manifold RWMH algorithm is obtained \citep{Girolami2011}.  Of course, if $\mu(\theta)$ is a constant vector, then an independence sampler results.  
If $\log \pi$ is differentiable, $C=I_d$, and $\mu(\theta) = \theta + h \nabla \log \pi(\theta) / 2$, then this is the proposal used in MALA \citep{Roberts1996Langevin}. 
A general covariance $C$ and $\mu(\theta) = \theta + h C \nabla \log \pi(\theta) / 2$ defines the proposal used in Riemannian manifold MALA \citep{Girolami2011}. 
The following result lower bounds the convergence rate of a Metropolis-Hastings kernel independently of $\mu(\theta)$. 

\begin{proposition}
\label{prop:lb_norm_prop}
Let $\theta_0 \in \Theta$ and the proposal distribution be $N(\mu(\theta), hC)$.  The Metropolis-Hastings acceptance probability  satisfies
\[
A(\theta_0)
\le \frac{1}{\pi(\theta_0) (2 \pi h)^{d/2} \det(C)^{1/2}}.
\]
\end{proposition}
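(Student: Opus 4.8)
The plan is to bound the acceptance probability by combining the elementary inequality $a \wedge 1 \le a$ with the fact that any Gaussian density is bounded above by its peak value at the mean. First I would write the Metropolis-Hastings acceptance probability at $\theta_0$ explicitly, using the form of $a_{MH}$ in \eqref{eq:a_mh}, as
\[
A(\theta_0) = \int \left[ \frac{\pi(\theta') q(\theta', \theta_0)}{\pi(\theta_0) q(\theta_0, \theta')} \wedge 1 \right] q(\theta_0, \theta') \, d\theta',
\]
where $q(\theta, \cdot)$ denotes the density of the proposal $N(\mu(\theta), hC)$.

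Second, applying $a \wedge 1 \le a$ to the bracketed term and cancelling the common factor $q(\theta_0, \theta')$ gives
\[
A(\theta_0) \le \frac{1}{\pi(\theta_0)} \int \pi(\theta') q(\theta', \theta_0) \, d\theta'.
\]
The remaining task is to control the reverse proposal density $q(\theta', \theta_0)$, which is the density of $N(\mu(\theta'), hC)$ evaluated at $\theta_0$.

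Third, I would observe that because $C^{-1}$ is positive-definite, the quadratic form $(\theta_0 - \mu(\theta'))^T C^{-1} (\theta_0 - \mu(\theta'))$ appearing in the exponent of $q(\theta', \theta_0)$ is non-negative, so the exponential factor is at most $1$ and hence
\[
q(\theta', \theta_0) \le \frac{1}{(2\pi h)^{d/2} \det(C)^{1/2}}
\]
uniformly in $\theta'$. Substituting this uniform bound and using $\int \pi(\theta') \, d\theta' = 1$ yields the stated inequality.

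There is no genuine obstacle here; the argument is a short chain of elementary bounds. The only point worth emphasizing is that the uniform upper bound on $q(\theta', \theta_0)$ equals the Gaussian normalizing constant and is completely independent of where the mean $\mu(\theta')$ sits. This is precisely what makes the final bound hold uniformly over all drift functions $\mu$, which is the feature highlighted in the statement preceding the proposition.
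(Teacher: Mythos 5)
Your argument is correct and is essentially the paper's own proof: both bound the minimum by the term $\pi(\theta')q(\theta',\theta_0)/[\pi(\theta_0)q(\theta_0,\theta')]$, cancel the forward proposal density, bound the reverse Gaussian density uniformly by its normalizing constant, and integrate $\pi$ to one. No substantive difference.
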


The bound in Proposition~\ref{prop:lb_norm_prop} results in general restrictions on the magnitude of the tuning parameter $h$ in that it forces $h$ to be small in order to avoid poor convergence properties. This suggests, that MH algorithms can have convergence rates with a poor dimension dependence, especially if the tuning parameter is not chosen carefully. An alternative is to choose $h \propto d^{-\delta}$ for some $\delta > 0$, which is reminiscent of the guidelines developed in the optimal scaling
  literature for large $d$, but under much weaker assumptions on the target distribution. In particular, the optimal scaling
  guidelines for RWMH suggest taking $h \propto d^{-1}$
  \citep{robe:weak:1997} while for MALA the optimal scaling guidelines
  suggest taking $h \propto d^{-1/3}$ \citep{robe:rose:1998:optimal}.

Normal proposal distributions are common in many applications of MH, but the key requirement in the proof of Proposition~\ref{prop:lb_norm_prop} is the boundedness of the proposal density.  Consider using a multivariate $t$-distribution as a proposal instead, specifically $t_\nu (\mu(\theta), hC)$.  Then, if $\Gamma(\cdot)$ denotes the usual gamma function,
\[
q(\theta, \theta_0) \le \frac{\Gamma((\nu+d)/2)}{\Gamma(\nu/2) (h \pi)^{d/2} \det(C)^{1/2}}
\]
and the same argument in the proof yields that
\[
A(\theta_0) \le \frac{\Gamma((\nu+d)/2)}{\pi(\theta_0) \Gamma(\nu/2) (h \pi)^{d/2} \det(C)^{1/2}}.
\]
As above, this suggests that large values of $h$ should be avoided.

\subsection{Lower bounds under concentration}
\label{section:concentration}

Section~\ref{sec:logistic_Zellner} considered lower bounds on RWMH under concentration of a strongly log-concave posterior.
If the target density is concentrating to its maximal point with $n$, intuition suggests that the tuning parameters of the Metropolis-Hastings algorithm should also depend on the parameter $n$. 
In infinitely unbalanced Bayesian logistic regression, proposals which depend on the sample size have been shown to exhibit more appealing convergence complexity when compared to data augmentation Gibbs samplers \citep{Johndrow2019}.

Consider target distributions indexed by a parameter $n \in \Z_+$ such as Bayesian posteriors where $n$ is the sample size.  Let $f_n : \R^d \to \R$ and $Z_n =
\int_{\R^d} \exp(-n f_n(\theta)) d\theta$. Define the target density by $\pi_n(\theta) = Z_n^{-1} \exp\left( -n f_n(\theta) \right)$.  If the proposal density is bounded so that there is $B < \infty$, with $B$ possibly depending on $n$ or $d$, such that $q(\theta', \theta) \le B$ (cf. Section~\ref{sec:bdd_prop}), then for MH algorithms, 
\[
A(\theta_0) \le B Z_n e^{n f_n(\theta_0)}.
\]
Of course, the best lower bound will result at $\theta_0 = \theta_n^*$, a point maximizing $\pi_n$.

A Laplace approximation can be used to lower bound the target density at its maximum point. The following result is inspired by previous results on Laplace approximations \citep{Kass1990, Tierney1986, Tierney1989}, but unlike high-dimensional Laplace approximations \citep{Shun1995, Tang2021}, $f_n$ is not required to be differentiable.

\begin{proposition}
\label{prop:concentrate_ub}
Suppose there exists at least one $\theta_n^* \in \Theta$ which maximizes $\pi_n$ for each $n$. Assume for some $\kappa \in (0, 1)$, the dimension $d_n \le n^\kappa$ and that the following holds for some constants $\delta_0, \lambda_0, f^*, I_0 \in (0, \infty)$ and for all sufficiently large $(n, d_n)$:

\noindent 1. $f_n$ is $\lambda_0^{-1}$-strongly convex for all $\norm{\theta -\theta_n^*}_2 \le \delta_0$.

\noindent 2. The optimal point satisfies the strict optimality condition:
\[
\inf_{\norm{v} > \delta_0} f_n(\theta_n^* + v) \ge f_n(\theta_n^*) + f^*.
\]

\noindent 3. The integral is controlled away from the optimum:
\[
\int_{\norm{v}_2 > \delta_0} \exp\left(- \left(f_n(\theta_n^* + v) -  f_n(\theta_n^*) \right) \right) dv \le (I_0 n)^{d_n}.
\]
Then for any $c \in (0, 1]$, for all sufficiently large $(n, d_n)$, the density $\pi_n$ concentrates at $\theta^*_n$ with
\begin{align*}
\pi_n(\theta^*_n)
&\ge \frac{1}{1 + c} \left( \frac{n}{2\pi \lambda_0} \right)^{d_n/2}.
\end{align*}
Moreover, if the proposal density is bounded so that there is $B < \infty$, with $B$ possibly depending on $n$ or $d$, such that $q(\theta', \theta) \le B$, then
\[
A(\theta_n^*) \le B (1 + c)\left( \frac{2 \pi \lambda_0}{n} \right)^{d_n/2}.
\]
\end{proposition}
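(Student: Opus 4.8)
The plan is to reduce the concentration claim to a single bound on the partition-function integral and then derive the acceptance bound mechanically. Writing $g_n(v) = f_n(\theta_n^* + v) - f_n(\theta_n^*) \ge 0$ and changing variables $\theta = \theta_n^* + v$ in $Z_n$ gives the identity
\[
\pi_n(\theta_n^*) = \frac{\exp(-n f_n(\theta_n^*))}{Z_n} = \left( \int_{\R^{d_n}} \exp\left(-n g_n(v)\right) dv \right)^{-1},
\]
so it suffices to show that the integral is at most $(1 + c)(2\pi\lambda_0/n)^{d_n/2}$. I would split $\R^{d_n}$ into the core $\{\norm{v}_2 \le \delta_0\}$ and the tail $\{\norm{v}_2 > \delta_0\}$ and estimate each separately.

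On the core, since $\theta_n^*$ minimizes $f_n$ we have $0 \in \partial f_n(\theta_n^*)$, and the $\lambda_0^{-1}$-strong convexity hypothesis yields the quadratic lower bound $g_n(v) \ge \norm{v}_2^2/(2\lambda_0)$. Hence $\exp(-n g_n(v)) \le \exp(-n\norm{v}_2^2/(2\lambda_0))$; extending the domain of integration to all of $\R^{d_n}$ and evaluating the resulting Gaussian integral bounds the core contribution by exactly $(2\pi\lambda_0/n)^{d_n/2}$, which is the desired leading term.

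On the tail, I would factor $\exp(-n g_n(v)) = \exp(-(n-1)g_n(v)) \exp(-g_n(v))$. The strict-optimality hypothesis gives $g_n(v) \ge f^*$ on the tail, so $\exp(-(n-1)g_n(v)) \le \exp(-(n-1)f^*)$, while the tail-integral hypothesis bounds $\int_{\norm{v}_2 > \delta_0} \exp(-g_n(v))\, dv \le (I_0 n)^{d_n}$; together these bound the tail contribution by $\exp(-(n-1)f^*)(I_0 n)^{d_n}$. The main obstacle is to verify that this tail is negligible relative to the core, i.e.\ that $\exp(-(n-1)f^*)(I_0 n)^{d_n} \le c\,(2\pi\lambda_0/n)^{d_n/2}$ for all sufficiently large $(n, d_n)$. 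Taking logarithms, the right-hand side contributes a term of order $d_n \log n$ while the left-hand side carries the linear decay $-(n-1)f^*$; the dimension constraint $d_n \le n^\kappa$ with $\kappa \in (0,1)$ forces every $d_n$-dependent term to be $o(n)$, so the linear term dominates and the inequality holds for large $n$, uniformly over the admissible $d_n$. This is precisely where the restriction $\kappa < 1$ is essential.

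Combining the two regions gives $\int_{\R^{d_n}} \exp(-n g_n(v))\, dv \le (1+c)(2\pi\lambda_0/n)^{d_n/2}$, and inverting yields the stated lower bound on $\pi_n(\theta_n^*)$. For the acceptance bound, I would bound the Metropolis-Hastings ratio by $a_{MH}(\theta_n^*, \theta') \le \pi_n(\theta') q(\theta', \theta_n^*)/[\pi_n(\theta_n^*) q(\theta_n^*, \theta')]$ and use $q(\theta', \theta_n^*) \le B$ to obtain $A(\theta_n^*) \le B/\pi_n(\theta_n^*)$, exactly the inequality recorded before the statement; substituting the concentration bound gives $A(\theta_n^*) \le B(1+c)(2\pi\lambda_0/n)^{d_n/2}$, which completes the proof.
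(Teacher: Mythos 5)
Your proposal is correct and follows essentially the same route as the paper's proof: the same core/tail decomposition of $Z_n$ at radius $\delta_0$, the same subgradient/strong-convexity bound yielding the Gaussian integral $(2\pi\lambda_0/n)^{d_n/2}$ on the core, the same factorization $e^{-ng_n}=e^{-(n-1)g_n}e^{-g_n}$ combined with hypotheses 2 and 3 on the tail, and the same use of $d_n\le n^\kappa$ to make the tail negligible. The only cosmetic difference is that the paper rescales $v=un^{-1/2}$ on the core before bounding, which does not change the computation.
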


Proposition~\ref{prop:concentrate_ub} requires the dimension to not grow too fast with $n$.
The first assumption is a locally strong convex assumption which ensures sufficient curvature of $f_n$ locally near the maximum point of $f_n$.
Since only a lower bound on the density is required, the need to control higher order derivatives used in high-dimensional Laplace approximations \citep{Shun1995, Tang2021} is avoided.
The second and third assumptions ensure sufficient decay of $f_n$ away from $\theta^*_n$ similar to assumptions made previously \citep{Kass1990, Tang2021}.
Similar assumptions are also used for Bayesian posterior densities with proper priors when the dimension is fixed \citep[Theorem 4]{Miller2021}. 

\begin{example}
\label{ex:norm_ub}
Recall the definitions of $\mu(\theta)$, $C$, and $h$ from Section~\ref{sec:bdd_prop} and consider MH with a $N(\mu(\theta), hC)$ proposal. Under the conditions of Proposition~\ref{prop:concentrate_ub}, the acceptance probability satisfies
\begin{equation}
\label{eq:high_dim_norm}
A (\theta_n^*)
\le \left( \frac{\lambda_0}{n h} \right)^{d_n/2} \frac{1 + c}{\det(C)^{1/2}}.
\end{equation}
Similarly, Proposition~\ref{prop:concentrate_ub} can be applied with the $t_\nu (\mu(\theta), hC)$ proposal considered in Section~\ref{sec:bdd_prop}.
\end{example}

It is evident from \eqref{eq:high_dim_norm} that  Proposition~\ref{prop:concentrate_ub} can be important when tuning Metropolis-Hastings algorithms used in Bayesian statistics.  In particular, if the target density is concentrating at its maximum point and the tuning parameters $h$ and $C$ do not depend carefully on $n$, then it easily can happen that $\lim_{(n, d_n) \to \infty} A (\theta_n^*) = 0$ rapidly.
Moreover, it can happen that the geometric convergence rate $\lim_{(n, d_n) \to \infty} \rho_n = 1$ rapidly.

\subsection{Flat prior Bayesian logistic regression}

Consider flat prior Bayesian logistic regression without assuming correctness of the data generation.
Let $(Y_i, X_i)_{i = 1}^n$ be independent and identically distributed with $Y_i \in \{0, 1\}$ and $X_i \in \R^d$.   With the sigmoid function $s(\cdot)$, the posterior density is characterized by
\[
\pi_n(\beta)
\propto \prod_{i = 1}^n s\left( \beta^T X_i \right)^{Y_i} \left( 1 -  s\left( \beta^T X_i \right) \right)^{1 - Y_i}.
\]

\begin{assumption}
\label{assumption:logistic_post_exists}
Let $z_i = 1$ if $Y_i = 0$ and $z_i = -1$ if $Y_i = 1$ and define $X^*$ to be the matrix with rows $z_i X_i^T$.
Suppose:

1. $X = (X_1, \ldots, X_n)^T$ is full column rank.

2. There exists a vector $a \in \R^n$ with all components positive such that ${X^*}^T a = 0$.
\end{assumption}

If Assumption~\ref{assumption:logistic_post_exists} holds with probability $1$ for all sufficiently large $n$, then both the maximum likelihood estimator (MLE) $\beta_n^*$ and the random Bayesian logistic regression posterior density, $\pi_n(\beta)$, exist \citep[Theorems 2.1, 3.1]{Chen2000}.  

The Pólya-Gamma Gibbs sampler has been shown to be geometrically ergodic for this model \citep{Wang2018}, but it remains an open question if Metropolis-Hastings is geometrically ergodic.
Since the prior is improper, previous results on posterior concentration \citep{Miller2021} do not apply. However, the next result shows that the posterior density can indeed concentrate so that Proposition~\ref{prop:concentrate_ub} can be applied for MH algorithms with bounded proposals.  For the sake of specificity, recall the definitions of $\mu(\theta)$, $C$, and $h$ from Section~\ref{sec:bdd_prop} so that the MH algorithms uses a $N(\mu(\theta), hC)$ proposal distribution.

\begin{theorem}
\label{thm:flat_lb}
Assume the following:
\begin{enumerate}

\item With probability $1$, Assumption~\ref{assumption:logistic_post_exists} holds for all sufficiently large $n$. \label{assumption:flat:existence}

\item The MLE $\beta^*_n$ is almost surely consistent to some $\beta_0 \in \R^d$. \label{assumption:flat:consistency}

\item $\norm{X_1}_2 \le 1$ with probability $1$. \label{assumption:flat:standardization}

\item For $u \in \R^d$, if $u \not=  0$, then $X_1^T u \not= 0$ with probability $1$. \label{assumption:flat:identifiable}

\end{enumerate}

\noindent Then, with probability $1$, for all sufficiently large $n$, there is a $\lambda_0 \in (0, \infty)$ so that  the MH acceptance probability satisfies
\[
A (\beta_n^*)
\le \left( \frac{\lambda_0}{n h} \right)^{d/2} \frac{2}{\det(C)^{1/2}}.
\]
\end{theorem}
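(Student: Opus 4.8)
The plan is to verify that the hypotheses of Proposition~\ref{prop:concentrate_ub} hold for the flat-prior logistic posterior and then invoke Example~\ref{ex:norm_ub} with $c = 1$, which reproduces the stated bound with the factor $2 = 1+c$. Writing $\pi_n(\beta) = Z_n^{-1}\exp(-n f_n(\beta))$, a direct computation using $1 - s(t) = s(-t)$ and the definition of $z_i$ gives the softplus form
\[
f_n(\beta) = \frac1n\sum_{i=1}^n \log\!\big(1 + e^{z_i X_i^T\beta}\big),
\]
which is convex with minimizer $\beta_n^*$. Since $d$ is fixed, the condition $d_n = d \le n^\kappa$ is automatic for large $n$, so it remains to check local strong convexity (condition 1) and the decay conditions (2 and 3); the constant $\lambda_0$ in the conclusion will be the strong-convexity constant produced in checking condition 1.

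For condition 1, the Hessian is $\nabla^2 f_n(\beta) = \frac1n\sum_i s(z_iX_i^T\beta)\{1-s(z_iX_i^T\beta)\}X_iX_i^T$ (using $z_i^2=1$). First I would use consistency of $\beta_n^*$ to $\beta_0$ (assumption \ref{assumption:flat:consistency}) and $\norm{X_1}_2\le1$ (assumption \ref{assumption:flat:standardization}) to note that on the ball $\norm{\beta-\beta_n^*}_2\le\delta_0$ the scalars $z_iX_i^T\beta$ lie in a fixed compact interval for all large $n$, so $s(1-s)\ge c_0>0$ there. By the strong law and assumption \ref{assumption:flat:identifiable}, $\frac1n\sum_i X_iX_i^T \to \E[X_1X_1^T]$ almost surely, and $\E[X_1X_1^T]$ is positive definite since $u^T\E[X_1X_1^T]u = \E[(X_1^Tu)^2]>0$ for $u\ne0$. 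Hence for large $n$ the smallest eigenvalue of $\frac1n\sum_iX_iX_i^T$ exceeds $\tfrac12\lambda_{\min}(\E[X_1X_1^T])$, so $\nabla^2 f_n \succeq \lambda_0^{-1}I$ on the ball with the fixed constant $\lambda_0^{-1} = \tfrac12 c_0\lambda_{\min}(\E[X_1X_1^T])$.

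For conditions 2 and 3 the key is a coercivity bound for $f_n$ that does not degrade with $n$. I would first establish \emph{population non-separation}: for every unit vector $u$, $\E[(z_1X_1^Tu)_+]>0$. Indeed, were $z_1X_1^Tu\le0$ almost surely for some unit $u$, then every sample would have $z_iX_i^Tu\le0$ for all $i$; dotting the overlap relation $\sum_i a_iz_iX_i=0$ (with $a>0$, from Assumption~\ref{assumption:logistic_post_exists}, which holds a.s.\ for large $n$ by assumption \ref{assumption:flat:existence}) with $u$ forces every $z_iX_i^Tu=0$, hence $Xu=0$, contradicting full column rank. Since $u\mapsto\E[(z_1X_1^Tu)_+]$ is continuous and positive on the unit sphere, its minimum $2\eta_0$ is positive, so the limit functional $\bar f(\beta)=\E[\log(1+e^{z_1X_1^T\beta})]$ is coercive with an $n$-free growth rate. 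Together with $\beta_n^*\to\beta_0$ this yields a fixed radius $R_0$ and $m_0>0$ with $\bar f(\beta_n^*+v)-\bar f(\beta_n^*)\ge m_0$ on $\norm{v}_2=R_0$ for large $n$; a uniform law of large numbers on compacts (the summands are bounded and Lipschitz there because $\norm{X_1}_2\le1$) transfers this to $f_n$. Radial monotonicity of convex functions then upgrades this to the linear bound $f_n(\beta_n^*+v)-f_n(\beta_n^*)\ge (m_0/R_0)\norm{v}_2$ for $\norm{v}_2\ge R_0$. Condition 2 follows by combining the strong-convexity estimate $f_n(\beta_n^*+v)-f_n(\beta_n^*)\ge \delta_0^2/(2\lambda_0)$ on $\norm{v}_2=\delta_0$ (condition 1 with $\nabla f_n(\beta_n^*)=0$) with radial monotonicity, and condition 3 follows since
\[
\int_{\norm{v}_2>\delta_0}\exp\!\big(-(f_n(\beta_n^*+v)-f_n(\beta_n^*))\big)\,dv \le \mathrm{vol}(\norm{v}_2\le R_0) + \int_{\R^d} e^{-(m_0/R_0)\norm{v}_2}\,dv,
\]
a finite constant independent of $n$, hence $\le (I_0 n)^{d}$ for large $n$.

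With conditions 1--3 verified and $c=1$, Example~\ref{ex:norm_ub} gives $A(\beta_n^*)\le (\lambda_0/(nh))^{d/2}\cdot 2/\det(C)^{1/2}$, as claimed. I expect the main obstacle to be the third paragraph: extracting a coercivity constant for $f_n$ that is uniform in $n$. The subtlety is that the overlap and full-rank hypotheses are finite-sample statements, so one must route through the population functional $\bar f$ and a uniform law of large numbers to obtain an $n$-free linear growth rate, and then control the integral on the noncompact region by convexity rather than by uniform convergence (which is available only on compacts).
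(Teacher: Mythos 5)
Your proposal is correct and follows the paper's top-level strategy — verify conditions (1)--(3) of Proposition~\ref{prop:concentrate_ub} and invoke the Gaussian-proposal bound of Example~\ref{ex:norm_ub} with $c=1$ — but the way you verify the conditions, particularly (2) and (3), is genuinely different. The paper never leaves the sample: after establishing $u^T\nabla^2 f_n(\beta_n^*)u \ge (\epsilon_0/4)\norm{u}_2^2$ (strong law applied to the Hessian at the fixed point $\beta_0$, identifiability, and the Lipschitz bound $\norm{\nabla^2 f_n(\beta_n^*)-\nabla^2 f_n(\beta_0)}_2 \le \norm{\beta_n^*-\beta_0}_2/4$), it exploits the self-concordance-type inequality $|\nabla^3 f_n(\cdot)\,u^{(3)}| \le \norm{u}_2\, u^T\nabla^2 f_n(\cdot)\,u$ for the logistic loss; this one inequality simultaneously propagates the curvature bound to the ball $\norm{v}_2\le 2$ (condition 1 with $\delta_0=2$) and delivers the global linear bound $f_n(\beta_n^*+v)-f_n(\beta_n^*)\ge(\epsilon_0/4)(\norm{v}_2-1)$, which settles conditions (2) and (3) in two lines. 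You instead handle condition (1) by bounding $s(1-s)$ below on a compact set and using positive definiteness of $\E[X_1X_1^T]$, and for the tail you route through the population functional: population non-separation (derived, nicely, from the overlap vector and full column rank), coercivity of $\bar f$, a uniform law of large numbers on compacts, and radial monotonicity of convex functions. Both arguments are sound. Yours is more elementary (no self-concordance) and makes the role of the overlap condition explicit beyond mere existence of the MLE, but it requires the extra ULLN machinery and a limiting functional; the paper's is shorter and entirely finite-sample once the Hessian bound at $\beta_n^*$ is in hand, using Assumption~\ref{assumption:logistic_post_exists} only to guarantee that $\beta_n^*$ and $\pi_n$ exist.
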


The first assumption ensures existence of the posterior density and MLE \citep[Theorem 2.1, 3.1]{Chen2000}.  Consistency of the MLE is a well-studied problem and conditions are available when the model is correctly specified \citep{Fahrmeir1985} or using M-estimation \citep[Example 5.40]{Vaart1998}. This requires standardization of the features which is often done for numerical stability. The fourth assumption was used previously \citep[Theorem 13]{Miller2021} and is used to ensure identifiability in generalized linear models \citep{Vaart1998}.

In this example an explicit value for $\lambda_0$ is unavailable, but the robustness of the scaling can be investigated empirically via a standard Monte Carlo estimate of the acceptance probability at the MLE $\beta^*_n$.  The estimate will be based on $10^3$ Monte Carlo samples. Consider using MH with a fixed variance parameter $h = .1$, and scaling with $h = 5/n$, $h = 1/n$, and $h = .1/n$. Artificial data is generated with $(Y_i, X_i)_i$ where $X_i \sim \text{Unif}(-1, 1)$ in fixed dimension $d = 10$ and increasing sample sizes $n \in \{ 100, 200, 300, 400 \}$. 
The simulation is replicated $50$ times independently.  
Figure~\ref{figure:flat_rwmsim_a} shows the total convergence rate lower bound and Figure~\ref{figure:flat_rwmsim_b} shows the log acceptance probability at the target's maximum using the average within $1$ estimated standard error.
It is apparent that $h = 5/n$ scales worse than $h = 1/n, .1/n$, but the scaling is not nearly as problematic as with the fixed variance parameter.

\begin{figure}[t]
\centering
\begin{subfigure}{.46\linewidth}
  \centering
  \includegraphics[width=\linewidth]{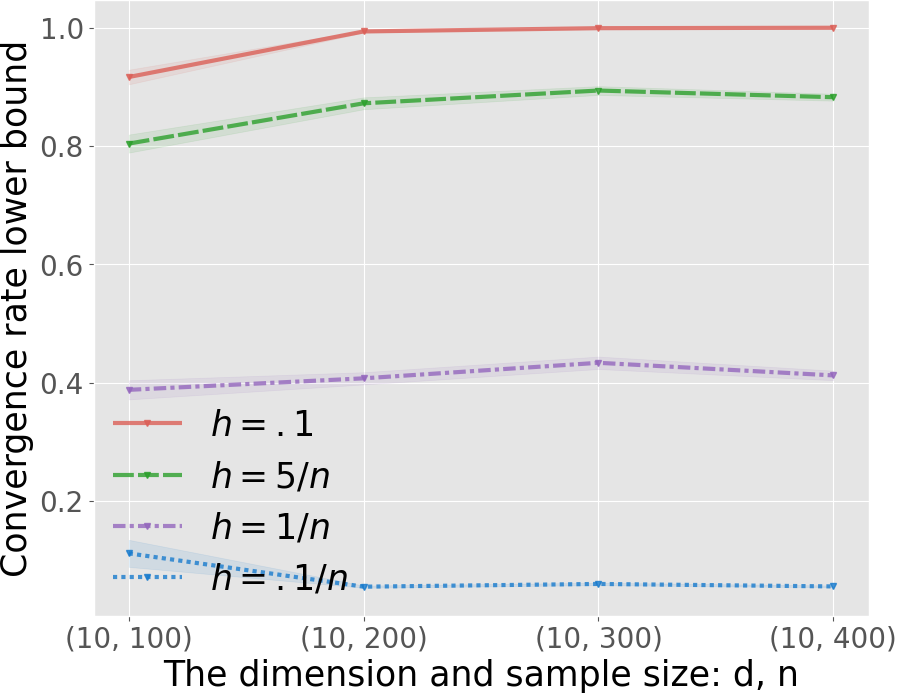}
  \caption{}
  \label{figure:flat_rwmsim_a}
\end{subfigure}
\hspace{.2cm}
\begin{subfigure}{.46\linewidth}
  \centering
  \includegraphics[width=\linewidth]{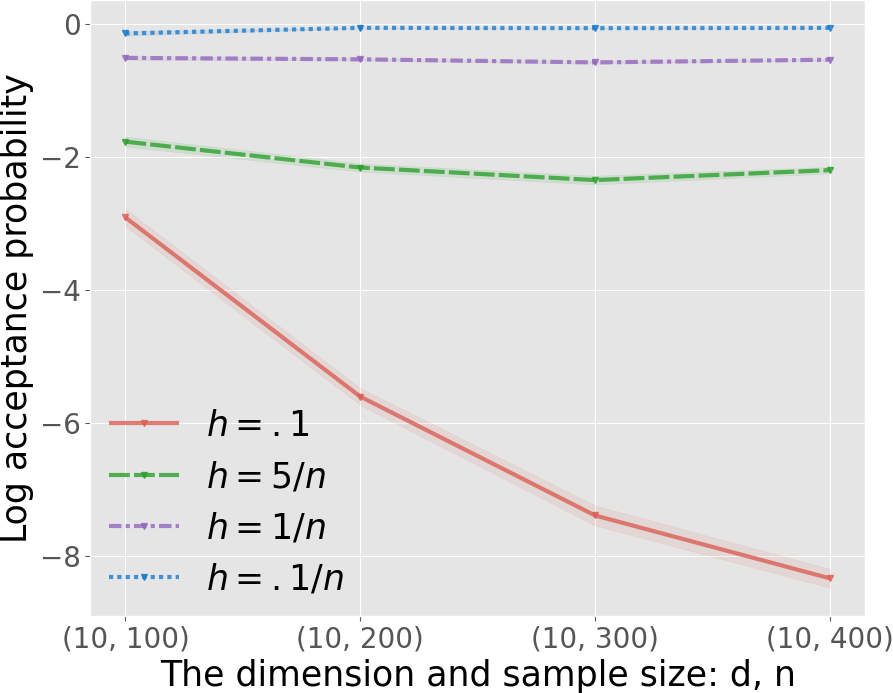}
  \caption{}
  \label{figure:flat_rwmsim_b}
\end{subfigure}
\caption[Lower bounds on the RWMH algorithm for flat prior logistic regression]{(a) Lower bounds to the rate of convergence for flat prior Bayesian logistic regression with RWMH and different scaling values. (b) Log acceptance probability at the target's maximum. The shaded regions represent $1$ standard deviation from the average after repeated simulations.} \label{figure:flat_rwmsim}
\end{figure}

\section{Componentwise Random Scan ARB Markov Chains}
\label{sec:componentwise}

It can be difficult to construct effective ARB chains in high dimensions or when $\Pi$ is complicated.  It is natural to consider so-called componentwise algorithms which consist of ARB Markov kernels that target the full conditionals of $\Pi$.  These algorithms can be useful in applications \citep[e.g.,][]{beze:2018, herb:mcke:2009}, indeed, the introduction of the Metropolis algorithm was a componentwise algorithm \cite{metr:1953}.  The componentwise updates can be combined in many ways through mixing or composition \cite{john:jone:neat:2013}. However, these algorithms are typically more complicated to analyze and their convergence properties have received limited attention \citep[see, e.g.,][]{fort:etal:2003, john:jone:neat:2013, jone:robe:rose:2014, qin:jone:2022, robe:rose:1997}.  

This section considers lower bounds for a componentwise random scan ARB Markov chain, which is now described. Suppose $\Omega = \prod_{k = 1}^M \Omega^k$ and that $\Pi$ continues to be supported on a nonempty measurable set $\Theta = \prod_{k=1}^{M} \Theta^k \subseteq \Omega$.
For each $k = 1, \ldots, M$, define the set $\theta^{(-k)}$ to be $\{ \theta^1, \ldots, \theta^M \} \setminus \{\theta^k \}$ and write the full conditional distributions as $\Pi_{\theta^{(-k)}}$ and densities as $\theta^k \mapsto \pi_{\theta^{(-k)}}(\theta^k)$.
The random scan ARB (ARB-RS) Markov chain is a convex combination of ARB Markov kernels targeting these conditional distributions.
Define $P_{\theta^{(-k)}}$ as the accept-reject-based Markov kernel targeting the conditional $\Pi_{\theta^{(-k)}}$ with proposal $Q_{\theta^{(-k)}}$ where the proposal may depend on $\theta^{(-k)}$.
The conditional distribution may be used as a proposal if it can be sampled directly resulting in a Gibbs update,  but it is assumed below that at least one of the updates is an ARB kernel that is not a Gibbs update.
Denote the corresponding acceptance probability by $A_{\theta^{(-k)}}(\cdot)$.
Let $\lambda_k \in (0, 1)$ be the selection probability of the $k$th component and assume $\sum_{k = 1}^M \lambda_k = 1$.
If $\theta = (\theta^1, \ldots, \theta^M) \in \Omega$ and $B = \prod_{k = 1}^M B^k$ where $B^k \subseteq \Omega^k$, the ARB-RS Markov chain has Markov kernel
\[
P_{RS}(\theta, B)
= \sum_{k = 1}^M \lambda_k \left\{ P_{\theta^{(-k)}}(\theta^k, B^k) \prod_{\theta_j \in \theta^{(-k)}} \delta_{\theta^j}(B^j) \right\}.
\]

\begin{theorem}
\label{thm:lb_rs}
For any $\theta \in \Theta$,
\[
\norm{
P_{RS}^t(\theta, \cdot) - \Pi
}_{\text{TV}}
\ge \left[ 
1 - \sum_{k = 1}^M \lambda_k A_{\theta^{(-k)}}(\theta^k)
\right]^t.
\]
\end{theorem}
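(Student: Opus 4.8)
The plan is to replicate the proof of Theorem~\ref{thm:lb}, replacing the scalar rejection probability $1 - A(\theta)$ by the holding probability of the random scan kernel at $\theta$. The crucial observation is that the singleton $\{\theta\}$ equals the product set $\prod_{k=1}^M \{\theta^k\}$, so the defining formula for $P_{RS}$ may be evaluated directly at $B = \{\theta\}$.

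First I would lower bound the one-step holding probability. Since $\delta_{\theta^j}(\{\theta^j\}) = 1$ for every $j$, and since the $k$th component kernel holds at $\theta^k$ with probability at least $1 - A_{\theta^{(-k)}}(\theta^k)$ (the Dirac contribution in the ARB kernel $P_{\theta^{(-k)}}$, the integral term being nonnegative), the formula for $P_{RS}$ together with $\sum_{k} \lambda_k = 1$ gives
\[
P_{RS}(\theta, \{\theta\}) \ge \sum_{k=1}^M \lambda_k [1 - A_{\theta^{(-k)}}(\theta^k)] = 1 - \sum_{k=1}^M \lambda_k A_{\theta^{(-k)}}(\theta^k) =: L.
\]
Because each $A_{\theta^{(-k)}}(\theta^k) \le 1$, this quantity satisfies $L \in [0,1]$, and $P_{RS}(\theta, \cdot)$ carries an atom of mass at least $L$ at $\theta$.

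Next I would iterate. Splitting the recursion $P_{RS}^t(\theta, \{\theta\}) = \int P_{RS}^{t-1}(\theta', \{\theta\}) \, P_{RS}(\theta, d\theta')$ into its atomic part at $\theta' = \theta$ and a nonnegative remainder yields $P_{RS}^t(\theta, \{\theta\}) \ge L \, P_{RS}^{t-1}(\theta, \{\theta\})$, and induction on $t$ gives $P_{RS}^t(\theta, \{\theta\}) \ge L^t$. Finally, evaluating the total variation distance on the indicator of $\{\theta\}$ and using $\Pi(\{\theta\}) = \int_{\{\theta\}} \pi \, d\lambda = 0$, which holds since $\lambda(\{\theta\}) = 0$, gives
\[
\norm{P_{RS}^t(\theta, \cdot) - \Pi}_{\text{TV}} \ge P_{RS}^t(\theta, \{\theta\}) - \Pi(\{\theta\}) \ge L^t,
\]
as claimed. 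The only step that requires genuine thought is the first: recognizing $\{\theta\}$ as a product set so that the Dirac factors pin the unselected coordinates and the holding probability collapses to the $\lambda_k$-convex combination of the componentwise rejection probabilities. Once that identification is made, the remainder is a verbatim repetition of the argument behind Theorem~\ref{thm:lb}.
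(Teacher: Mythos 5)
Your proposal is correct and follows essentially the same route as the paper's proof: both arguments reduce to the observation that the one-step holding probability of $P_{RS}$ at $\theta$ is at least $1 - \sum_{k=1}^M \lambda_k A_{\theta^{(-k)}}(\theta^k)$, iterate this bound $t$ times, and then test the total variation distance against the singleton $\{\theta\}$, which has $\Pi$-measure zero. The only cosmetic difference is that the paper phrases the one-step bound via test functions $\psi : \Theta \to [0,1]$ and specializes to the indicator $I_{\{\theta\}}$, whereas you work directly with the measure of the singleton; these are the same computation.
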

This result can be understood as an extension of Theorem~\ref{thm:lb} if when $M=1$, $\theta^{(-k)}$ is understood as null so that $A_{\theta^{(-1)}}(\theta^1) = A(\theta)$ from Section~\ref{sec:arb_mc}.  As in Theorem~\ref{thm:lb_rate}, this result will yield a lower bound on the geometric rate of convergence for $P_{RS}$.
However, the lower bound does not necessarily tend to unity if only one of the componentwise acceptance probabilities in the convex combination tends to unity. Consider the following simple, illustrative example.

\begin{example}
    \label{ex:hybrid}
    Consider the sub-exponential density 
    \[
    \pi(\theta_1, \theta_2) \propto \exp\left\{ -\left(\theta_1^2 + \theta_1^2 \theta_2^2 + \theta_2^2 \right)\right\},  \hspace{5mm} (\theta_1, \theta_2) \in \mathbb{R}^2.
    \]
    Full-dimensional random walk Metropolis-Hastings Markov chains will not be geometrically ergodic \citep{Jarner2000, Roberts1996geo}, but the random scan random walk Metropolis Markov chain is geometrically ergodic \cite{fort:etal:2003}.  The two full conditionals are $\theta_1 \mid \theta_2 \sim \text{N}(0, 1/(2 (1 + \theta_2^2)))$ and  $\theta_2 \mid \theta_1 \sim \text{N}(0, 1/(2 (1 + \theta_1^2)))$.  Consider using a random walk componentwise random scan Metropolis-Hastings algorithm with proposals $\text{N} (\theta_1,  h)$ and $\text{N} (\theta_2,  h)$ for the conditionals $\pi_{\theta_2}(\theta_1)$ and  $\pi_{\theta_1}(\theta_2)$, respectively.  A routine calculation yields
    \[
    A_{\theta_2} (\theta_1) = \int_{\mathbb{R}} \left( 1 \wedge \frac{\pi_{\theta_2}(\theta_1') q_{\theta_2}(\theta_1', \theta_1)}{\pi_{\theta_2}(\theta_1) q_{\theta_2}(\theta_1, \theta_1')} \right)
    q_{\theta_2}(\theta_1, \theta_1')d\theta_1' \le \frac{1}{\sqrt{2h}}
    \]
    and similarly for $A_{\theta_1} (\theta_2)$. If $h \ge 1/2$ and the initial value is the origin, then,  for any random scan selection probabilities,
    \[
    \|P_{RS}^t(0, \cdot) - \Pi\|_{\text{TV}} \ge \left(1- \frac{1}{\sqrt{2h}}\right)^t .
    \]
\end{example}

Under the assumptions of Section~\ref{sec:wass_lb}, there are similar results in many Wasserstein distances

\begin{theorem}
\label{thm:rs_was_lb}
Let assumptions \ref{assumption:ub_pi} and \ref{assumption:wass_condition} hold. If $C_{d, \pi} = C_{0, d} d [ 2 s^{\frac{1}{d}}
  ( 1 + d )^{1 + \frac{1}{d}} ]^{-1}$, then for every $t \in \Z_+$ and every $\theta \in \Theta$, then
\[
\W_{c}^p(P_{RS}^t(\theta, \cdot), \Pi)
\ge C_{d, \pi} \left[ 1 - \sum_{k = 1}^M \lambda_k A_{\theta^{(-k)}}(\theta^k) \right]^{t \left( 1 + \frac{1}{d} \right)}.
\]
\end{theorem}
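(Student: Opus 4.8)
The plan is to mirror the proof of Theorem~\ref{thm:wasserstein_lb}, simply replacing the single-kernel rejection mass with the random-scan holding mass already identified in Theorem~\ref{thm:lb_rs}. Write $\mu_t = P_{RS}^t(\theta, \cdot)$. Since the Wasserstein order is monotone — for any coupling $\Gamma$, Jensen's inequality gives $\left(\int c^p \, d\Gamma\right)^{1/p} \ge \int c \, d\Gamma$, so $\W_c^p(\mu_t, \Pi) \ge \W_c^1(\mu_t, \Pi)$ — it suffices to establish the bound for the first-order distance $\W_c^1$ and then invoke this reduction.

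The first step is to show that $\mu_t$ carries a large atom at $\theta$. In a single step, selecting component $k$ (probability $\lambda_k$) and then rejecting the proposal of $P_{\theta^{(-k)}}$ (probability $1 - A_{\theta^{(-k)}}(\theta^k)$) leaves the \emph{entire} vector $\theta$ unchanged, so from the definition of $P_{RS}$,
\[
P_{RS}(\theta, \{\theta\}) \ge \sum_{k=1}^M \lambda_k \left[ 1 - A_{\theta^{(-k)}}(\theta^k) \right] = 1 - \sum_{k=1}^M \lambda_k A_{\theta^{(-k)}}(\theta^k).
\]
Iterating via Chapman--Kolmogorov and retaining only the contribution of the path that never leaves $\theta$,
\[
P_{RS}^t(\theta, \{\theta\}) \ge P_{RS}(\theta, \{\theta\})^t \ge \left[ 1 - \sum_{k=1}^M \lambda_k A_{\theta^{(-k)}}(\theta^k) \right]^t =: m_t ,
\]
which is exactly the holding mass underlying Theorem~\ref{thm:lb_rs}.

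The second step converts this atom into a Wasserstein lower bound, reusing the transport estimate behind Theorem~\ref{thm:wasserstein_lb}. Fix any $\Gamma \in \C(\mu_t, \Pi)$ and let $\nu(\cdot) = \Gamma(\{\theta\} \times \cdot)$ describe where the atom's mass is transported. Its total mass is $\mu_t(\{\theta\}) \ge m_t$, and since its second marginal is dominated by $\Pi$, Assumption~\ref{assumption:ub_pi} gives $\nu(B) \le \Pi(B) \le s\,\mathrm{Leb}(B)$ for every Borel $B$. Bounding the $\ell_1$-ball by its circumscribing cube, the $\nu$-mass within $\ell_1$-radius $r$ of $\theta$ is at most $s(2r)^d$; choosing $r$ so that $s(2r)^d = m_t/(d+1)$ forces at least $m_t\, d/(d+1)$ of the mass beyond radius $r$. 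Hence, using Assumption~\ref{assumption:wass_condition} and a Markov-type estimate,
\[
\int c(\theta, \w) \, d\Gamma \ge C_{0,d} \int \norm{\theta - \w}_1 \, d\nu \ge C_{0,d}\, r\, \frac{m_t\, d}{d+1} = C_{d,\pi}\, m_t^{1 + 1/d},
\]
where the last equality is the arithmetic identity $r = \tfrac{1}{2}\left(\tfrac{m_t}{s(d+1)}\right)^{1/d}$ combined with the definition of $C_{d,\pi}$. Taking the infimum over $\Gamma$ gives $\W_c^1(\mu_t, \Pi) \ge C_{d,\pi}\, m_t^{1+1/d}$, and substituting $m_t = [1 - \sum_{k} \lambda_k A_{\theta^{(-k)}}(\theta^k)]^t$ together with the order reduction $\W_c^p \ge \W_c^1$ yields the claim.

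I expect no essential obstacle here: the delicate part — the transport computation producing the exponent $1 + 1/d$ and the precise constant $C_{d,\pi}$ — is inherited verbatim from Theorem~\ref{thm:wasserstein_lb}, while the only genuinely new ingredient is the random-scan holding-mass bound, which is routine because a rejected componentwise update freezes the whole state $\theta$. The only points requiring care are the reduction from $\W_c^p$ to $\W_c^1$ and checking that the cube-volume estimate is applied with the radius split that reproduces exactly the factor $(1+d)^{1+1/d}$ in the denominator of $C_{d,\pi}$.
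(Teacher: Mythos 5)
Your proof is correct, but it proceeds by a genuinely different route than the paper's. The paper argues on the dual side: it takes the Lipschitz test function $\phi_{\alpha,\theta}(\w) = \exp(-\alpha\norm{\w-\theta}_1)$, lower bounds $\int \phi_{\alpha,\theta}\, dP_{RS}^t(\theta,\cdot)$ by the same ``never leave $\theta$'' recursion you use, upper bounds $\int \phi_{\alpha,\theta}\, d\Pi$ by $s2^d\alpha^{-d}$, invokes Kantorovich--Rubinstein, and optimizes over $\alpha$; the order-$p$ extension is then done with H\"older on $c\cdot 1$. You instead work on the primal side: you extract the atom $P_{RS}^t(\theta,\{\theta\}) \ge m_t$ directly via Chapman--Kolmogorov, and for an arbitrary coupling you show that the density bound $\Pi \le s\,\mathrm{Leb}$ forces at least $m_t d/(d+1)$ of the atom's mass to travel $\ell_1$-distance at least $r = \tfrac12(m_t/(s(d+1)))^{1/d}$, which reproduces exactly $C_{d,\pi} m_t^{1+1/d}$ (your choice of $r$ plays the role of the paper's optimization over $\alpha$, and your cube bound $(2r)^d$ matches the paper's $\int e^{-\alpha\norm{\cdot}_1} = 2^d\alpha^{-d}$). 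Your order reduction $\W_c^p \ge \W_c^1$ via Jensen is equivalent to the paper's H\"older step. What your approach buys is that it avoids duality entirely --- no appeal to the Kantorovich--Rubinstein theorem is needed, and the geometric mechanism (a persistent atom that $\Pi$ cannot absorb locally) is laid bare; what the paper's approach buys is that the same test-function machinery is already set up verbatim in the proof of Theorem~\ref{thm:wasserstein_lb}, so the componentwise case is a two-line modification. Both yield the identical constant and exponent.
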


\section{Final remarks}
\label{sec:final}

The lower bounds developed above are primarily useful for determining when the simulation of an ARB algorithm will fail, even when a good initial value is chosen and the Markov chain is geometrically ergodic.  Optimizing the lower bounds (making them smaller) will not ensure rapid convergence. Particular attention was paid to the convergence complexity of ARB chains and it was demonstrated that the tuning parameters must carefully take into account both the sample size $n$ and parameter dimension $d$. In particular, RWMH and MALA-type algorithms have strong restrictions on the scaling parameter $h$ when the target density concentrates with $n$. If these restrictions are ignored these algorithms will fail. Tuning MH algorithms in high-dimensional, large sample size settings is delicate.

Lower bounds on the convergence rate appear to be available for other sampling algorithms such as  Hamiltonian Monte Carlo, deterministic scan componentwise MCMC (e.g. Metropolis-within-Gibbs), and some adaptive MCMC algorithms, but these are beyond the scope of the current work.  

It is natural to consider if the lower bounds for ARB algorithms have implications for their unadjusted counterparts, such as the unadjusted Langevin algorithm \cite{Roberts1996Langevin}.  The rejection step of an ARB chain is fundamental to the current work implying that alternative methods may be required to develop useful lower bounds.
However, the equivalence in Proposition~\ref{thm:equiv} may be applied to continuous-time Langevin Markov processes and may be beneficial for developing new techniques for their unadjusted counterparts.

\section{Acknowledgments}
The authors thank Riddhiman Bhattacharya, Qian Qin, and Dootika Vats for their helpful comments on an earlier draft. The authors would also like to thank the Associate Editor and two anonymous referees for their constructive comments that helped to improve the paper.

\section{Funding}
Jones was partially supported by NSF grant DMS-2152746.

\section{Supplementary material}
The Python package ``mhlb'' and the code used for the simulations and plots are made available for download at
\url{https://github.com/austindavidbrown/lower-bounds-for-Metropolis-Hastings}.

\begin{appendix}

\section{Proofs for Section~\ref{sec:tv_lb}}
\label{proof:sec:tv_lb}
\begin{proof}[Proof of Theorem~\ref{thm:lb}]
Fix $\theta \in \Theta$ and let $\phi(\cdot) = I_{\{ \theta \}}(\cdot)$.
For any function $\psi : \Theta \to [0, 1]$ any any $\w \in \Theta$,
\[
\int_{\Theta} \psi dP(\w, \cdot)
= \int_{\Theta} \psi a(\w, \cdot) dQ(\w, \cdot) + [1 - A(\w)] \psi(\w)
\ge [1 - A(\w)] \psi(\w).
\]  
Applying this recursively for $t \ge 1$,
\[
\int_{\Theta} \phi dP^t (\theta, \cdot) = \int_{\Theta} \int_{\Theta} \phi dP^{t-1}(\theta, u) dP(u, \cdot) \ge [1 - A(\theta)]^t \phi (\theta).
\]
and since $\pi$ has a density on $\Theta$ with respect to $\lambda$, $\int_{\Theta} \phi d\Pi = 0$.

Let $\Psi$ be the set of functions $\psi : \Theta \to [0, 1]$. Notice that $\phi \in \Psi$ and, since $\pi$ is a density on $\Theta$ with respect to $\lambda$, $\int_{\Theta} \phi d\Pi = 0$.  Thus,
\begin{align*}
\norm{P^t(\theta, \cdot) - \Pi}_{\text{TV}}
&= \sup_{\psi \in \Psi} \left| \int_{\Theta} \psi dP^t(\theta, \cdot) - \int_{\Theta} \psi d\Pi \right|
\\
&\ge \left| \int_{\Theta} \phi dP^t(\theta, \cdot) - \int_{\Theta} \phi d\Pi \right|
\\
&= \int_{\Theta} \phi dP^t(\theta, \cdot)
\\
&\ge [1 - A(\theta)]^t.
\end{align*}
\end{proof}

\begin{proof}[Proof of Theorem~\ref{thm:lb_rate}] 
  Fix $\theta \in \Theta$.  Apply Theorem~\ref{thm:lb} and
  use the assumed geometric ergodicity to obtain for every
  $t \in \Z_+$,
\[
M(\theta) \rho^t
\ge \norm{P^t(\theta, \cdot) - \Pi}_{\text{TV}} 
\ge [1 - A(\theta)]^t
\]
and thus
\[
M(\theta)^{1/t} \rho
\ge 1 - A(\theta).
\]
This implies
\[
\rho
= \lim_{t \to \infty} M(\theta)^{1/t} \rho
\ge 1 - A(\theta).
\]
\end{proof}

\begin{proof}[Proof of Proposition~\ref{prop:conductance_accept_ub}]
Fix $\theta_0 \in \Theta$.
By regularity of $\Pi$ \citep[Lemma 1.36]{Kallenberg2021}, there is a sequence $B_{r_n}(\theta_0) = \{x \in \Theta : d(x, \theta_0) < r_n \} \subseteq \Theta$ of open balls with radius $r_n$ centered at $\theta_0$ such that
\[
\lim_n \Pi(B_{r_n}(\theta_0))
= \Pi(\{ \theta_0 \})
= 0
\]
and $n$ can be chosen large enough so that $\Pi(B_{r_n}(\theta_0)) < 1$.
Since $\theta_0 \in \Theta$ and $B_{r_n}(\theta_0) \subseteq \Theta$, then by assumption on the support of $\pi$ and since $\lambda$ is strictly positive, then 
\[
\Pi(B_{r_n}(\theta_0))
= \int_{B_{r_n}(\theta_0)} \pi d\lambda
> 0.
\]
For $x \in B_{r_n}(\theta_0)$, $\delta_x(B_{r_n}(\theta_0)^c) = 0$ and so,
\begin{align}
k_P
\le \frac{\int_{B_r} P(\cdot, B_{r_n}(\theta_0)^c) d\Pi}{\Pi(B_{r_n}(\theta_0)) \Pi(B_{r_n}(\theta_0)^c)} \nonumber
&= \frac{\int_{B_{r_n}(\theta_0)} \int_{B_{r_n}(\theta_0)^c} a(\cdot, \cdot) dQ(\cdot, \cdot) d\Pi}{\Pi(B_{r_n}(\theta_0)) \Pi(B_{r_n}(\theta_0)^c)} \nonumber
\\
&\le \frac{\int_{B_{r_n}(\theta_0)} A(\cdot) d\Pi}{\Pi(B_{r_n}(\theta_0)) \Pi(B_{r_n}(\theta_0)^c)} \nonumber
\\
&\le \frac{\sup_{x \in B_{r_n}(\theta_0)} A(x)}{\Pi(B_{r_n}(\theta_0)^c)}. \label{eq:conductance_ub}
\end{align}
Choose a sequence $x^*_{n} \in B_{r_n}(\theta_0)$ such that 
\[
\sup_{x \in B_{r_n}(\theta_0)} A(x)
\le A(x^*_{n}) + 1/n.
\]
Since $d(x^*_{n}, \theta_0) < r_n$, then $\lim_{n \to \infty} d(x^*_{n}, \theta_0) = 0$.
By upper semicontinuity of $A(\cdot)$ and taking the limit superior of \eqref{eq:conductance_ub}, 
\[
k_P
\le \limsup_{n \to \infty} \frac{\sup_{x \in B_{r_n}(\theta_0)} A(x)}{1 - \Pi(B_{r_n}(\theta_0))}
\le \limsup_{n \to\infty} A(x^*_{n})
\le A(\theta_0).
\]
Since this holds for every $\theta_0 \in \Theta$, the desired result follows by taking the infimum.
\end{proof}

\section{Proofs for Section~\ref{sec:applications}}

\begin{proof}[Proof of Proposition~\ref{prop:lb_norm_prop}]
For all $\theta'$, $\theta$, 
\[
q(\theta, \theta') \le (2 \pi h)^{-d/2} \det(C)^{-1/2}
\]
and hence
\begin{align}
A(\theta_0) & = \int_{\Theta} \left\{ 1 \wedge \frac{\pi(\theta') q(\theta', \theta_0)}{\pi(\theta_0) q(\theta_0,\theta')} \right\} q(\theta_0, \theta') d \theta'\nonumber \\
& = \int_{\Theta} \left\{ \frac{q(\theta_0, \theta')}{\pi(\theta')} \wedge \frac{q(\theta', \theta_0)}{\pi(\theta_0) } \right\} \pi(\theta') d \theta'\nonumber \\
& \le \int_{\Theta} \frac{q(\theta', \theta_0)}{\pi(\theta_0)}  \pi(\theta') d \theta'\nonumber \\
&\le \frac{1}{\pi(\theta_0) (2 \pi h)^{d/2} \det(C)^{1/2}}. \label{inequality:compare_accepts_rwm}
\end{align}
\end{proof}

\begin{proof}[Proof of Proposition~\ref{prop:sc_rwm}]
  By the subgradient inequality
  \citep[Corollary 3.2.1]{Nesterov2018}, for every
  $\theta, \theta_0 \in \Theta$ and $v \in \partial f(\theta_{0})$,
\begin{align*}
f(\theta) - f(\theta_0) 
&\ge v^T(\theta - \theta_0) + \frac{1}{2 \xi} \norm{\theta - \theta_0}_2^2.
\end{align*}
Thus,
\begin{align*}
  A(\theta_0)
  &\le \frac{1}{(2 \pi h)^{d/2}} \int \exp\left\{ f(\theta_0) - f(\theta) \right\} \exp\left\{ - \frac{1}{2 h} \norm{\theta - \theta_0}_2^2 \right\} d\theta
  \\
  &\le \frac{1}{(2 \pi h)^{d/2}} \int \exp\left\{v^T(\theta_0 -
    \theta) + \frac{1}{2 \xi} \norm{\theta - \theta_0}_2^2\right\} \exp\left\{ - \frac{1}{2 h} \norm{\theta - \theta_0}_2^2
    \right\} d\theta
\end{align*}
and a routine calculation involving completing the square yields the claim.
\end{proof}

\begin{proof}[Proof of Proposition~\ref{prop:zellner_example}]
To apply Proposition~\ref{prop:sc_rwm}, it will be shown that  with probability 1, for sufficiently large $n$, the target density is strongly convex.

Notice that, with probability $1$, if $d_n/n \to \gamma \in (0, 1)$, then
\[
\lambda_{min}\left( \frac{1}{n} X^T X \right)
\ge \frac{1}{2} (1 - \sqrt{\gamma})^2
\]
for all $n$ sufficiently large \citep[Theorem 2]{Bai1993}. Define 
\[
f_n(\beta) = \frac{1}{n} \sum_{i = 1}^n \left[ \log\left( 1 + \exp\left( \beta^T X_i \right) \right) - Y_i \beta^T X_i \right] + \frac{1}{2 g n} \beta^T X^T X \beta,
\] 
and write the posterior density $\pi_n \propto \exp(-n f_n)$. Let ${\nabla}^2 f_n$ denote the Hessian of $f_n$.
For every $v, u \in \R^d$,
\[
u^T {\nabla}^2 f_n(v) u
\ge \frac{1}{g n} u^T X^T X u
\ge \frac{(1 - \sqrt{\gamma})^2}{2 g} \norm{u}_2^2.
\]
Then $n f_n(\cdot)$ is strongly convex with convexity parameter $\frac{n (1 - \sqrt{\gamma})^2}{2 g}$ on $\R^d$ with probability $1$ \citep[Theorem 2.1.11]{Nesterov2018}.
\end{proof}

\begin{proof}[Proof of Proposition~\ref{prop:concentrate_ub}]
Take $n$ sufficiently large so that the each of the assumptions hold. After changing the variables, obtain the decomposition
\begin{align}
\frac{1}{\pi(\theta^*_n)}
&= \int_{\R^d} \exp(-n (f_n(\theta) - f_n(\theta_n^*))) d\theta \nonumber
\\
&= \frac{1}{n^{d/2}} \int_{\norm{u  n^{-1/2}}_2 \le \delta_0} \exp(-n (f_n(\theta_n^* + u n^{-1/2}) - f_n(\theta_n^*))) du \label{eq:decomposition1} 
\\
&\hspace{.4cm}+ \int_{\norm{v}_2 > \delta_0} \exp(-n (f_n(\theta_n^* + v ) - f_n(\theta_n^*))) dv \label{eq:decomposition2}.
\end{align}

Consider the first integral \eqref{eq:decomposition1}. 
Since the closed ball $\overline{B_{\delta_0}(\theta_n^*)}$ is convex, for all $\norm{u n^{-1/2}}_2 \le \delta_0$, by the subgradient inequality \citep[Lemma 3.2.3]{Nesterov2018},
\begin{align*}
n f_n(\theta^*_n + u n^{-1/2}) - n f_n(\theta^*_n)
&\ge \frac{1}{2 \lambda_0} \norm{u}_2^2.
\end{align*}
This implies
\begin{align*}
&\frac{1}{n^{d_n/2}} \int_{\norm{n^{-1/2} u}_2 \le \delta_0} \exp(n f_n(\theta^*_n) - n f_n(\theta^*_n + u n^{-1/2})) du
\\
&\le \frac{1}{n^{d_n/2}}  \int_{\norm{u n^{-1/2}}_2 \le \delta_0} \exp\left( -\frac{1}{2 \lambda_0} \norm{u}_2^2 \right) du
\\
&\le \left( \frac{2\pi \lambda_0}{n} \right)^{d_n/2}.
\end{align*}

Consider the second integral \eqref{eq:decomposition2}
\begin{align*}
&\int_{\norm{v}_2 > \delta_0} \exp\left(-n \left( f_n(\theta_n^* + v) - f_n(\theta_n^*) \right) \right) dv
\\
&= \int_{\norm{v}_2 > \delta_0} \exp\left(-(n - 1) \left( f_n(\theta_n^* + v) - f_n(\theta_n^*) \right) - \left( f_n(\theta_n^* + v) - f_n(\theta_n^*) \right) \right) dv
\\
&\le e^{-(n - 1) f^*} \int_{\norm{v}_2 > \delta_0} \exp\left(-\left( f_n(\theta_n^* + v) - f_n(\theta_n^*) \right) \right) dv
\\
&\le e^{-(n - 1) f^*} (I_0 n)^{d_n}.
\end{align*}
Combining these results,
\begin{align*}
&\int_{\R^d} \exp(-n (f_n(\theta) - f_n(\theta_n^*))) d\theta
\\
&\le \left( \frac{2\pi \lambda_0}{n} \right)^{d_n/2}
\left[1 + \left( \frac{n}{2\pi \lambda_0 } \right)^{d_n/2} e^{-(n - 1) f^*} (I_0 n)^{d_n} \right].
\end{align*}
Since $d_n \le n^{\kappa}$,
\begin{align*}
\limsup_{n \to \infty} \left( \frac{n}{2\pi \lambda_0 } \right)^{d_n/2} e^{-(n - 1) f^*} (I_0 n)^{d_n} = 0.
\end{align*}
The desired result follows at once.
\end{proof}

\begin{proof}[Proof of Theorem~\ref{thm:flat_lb}] 
The proof will show the conditions of Proposition~\ref{prop:concentrate_ub} hold with probability $1$ for large enough $n$.
Using Assumption~\ref{assumption:flat:existence}, with probability $1$, assume $n$ is sufficiently large so that the posterior density $\pi_n^*$ and the MLE $\beta^*_n$ exist.
Define 
\[
f_n(\beta) = \frac{1}{n} \sum_{i = 1}^n \left[ \log\left( 1 + \exp\left( \beta^T X_i \right) \right) - Y_i \beta^T X_i \right]
\] 
and write $\pi_n^* = Z_n^{-1} \exp(-n f_n)$ where $Z_n = \int \exp(-n f_n(\beta)) d\beta$.
The first step is to develop sufficient curvature of the target density at $\beta_n^*$.
Denote the $k$th derivative matrix or tensor of the function $f_n$ by ${\nabla}^{k} f_n$.
Recall that $s$ is the sigmoid function. For every $v \in \R^d$, 
\[
v^T{\nabla}^2 f_n(\beta_n^*) v
= \frac{1}{n} \sum_{i = 1}^n v^T X_i X_i^T v s\left( X_i^T \beta_n^* \right) \left( 1 - s\left( X_i^T \beta_n^* \right) \right)
\]
and
\[
{\nabla}^3 f_n(\beta_n^*) v^{(3)}
= \frac{1}{n} \sum_{i = 1}^n ( X_i^T v )^3 s\left( X_i^T \beta_n^* \right) \left( 1 - s\left( X_i^T \beta_n^* \right) \right) \left( 1 - 2 s\left( X_i^T \beta_n^* \right) \right).
\]
Since $\norm{X_i}_2 \le 1$ with probability $1$, by the strong law of large numbers \citep[Theorem 10.13]{Folland2013}, almost surely,
\begin{align*}
{\nabla}^2 f_n(\beta_0)
&= \frac{1}{n} \sum_{i = 1}^n X_i X_i^T s\left( X_i^T \beta_0 \right) \left( 1 - s\left( X_i^T \beta_0 \right) \right)
\\
&\to \E\left( X_1 X_1^T s\left( X_1^T \beta_0 \right) \left( 1 - s\left( X_1^T \beta_0 \right) \right) \right).
\end{align*}
By Assumption~\ref{assumption:flat:identifiable}, for any $u \in \R^d$, $u \not= 0$, with probability $1$,
\[
u^T X_1 X_1^T u s\left( X_1^T \beta_0 \right) \left( 1 - s\left( X_1^T \beta_0 \right) \right) > 0.
\]
Since expectations preserve strict inequalities, there is a sufficiently small $\e_0 \in (0, 1)$ so that for any $u \in \R^d$, $u \not= 0$,
\[
\left( \frac{u}{\norm{u}_2} \right)^T \E\left( X_1 X_1^T s\left( X_1^T \beta_0 \right) \left( 1 - s\left( X_1^T \beta_0 \right) \right) \right) \left( \frac{u}{\norm{u}_2} \right) \ge \e_0.
\]
Combining these results, for all $u \in \R^d$, with probability $1$,
\begin{align*}
u^T {\nabla}^2 f_n(\beta_0) u
&\ge \frac{\e_0}{2} \norm{u}_2^2
\end{align*}
for all sufficiently large $n$.
By Assumption~\ref{assumption:flat:standardization}, $\norm{X_i}_2 \le 1$ with probability $1$ and $|x(1 - x) (1 - 2x)| \le 1/4$, so by the mean value theorem, almost surely,
\[
\norm{{\nabla}^2 f_n(\beta_n^*) - {\nabla}^2 f_n(\beta_0)}_2
\le 4^{-1} \norm{\beta^*_n - \beta_0}_2
\to 0.
\]
For all $u \in \R^d$, with probability $1$, for all $n$ sufficiently large,
\begin{align}
u^T {\nabla}^2 f_n(\beta_n^*) u
&\ge u^T {\nabla}^2 f_n(\beta_0) u - \frac{\e_0}{4} \norm{u}_2^2 \nonumber
\\
&\ge \frac{\e_0}{4} \norm{u}_2^2. \label{eq:flat:curvature}
\end{align}

For the remainder of this argument, assume $n$ is sufficiently large so that \eqref{eq:flat:curvature} holds with probability $1$ and the remainder of the proof is taken to hold with probability $1$ without reference.
Since $\norm{X_i}_2 \le 1$, for all $u \in \R^d$ and $t \in (0, \infty)$,
\[
| {\nabla}^3 f_n(\beta_n^* + t u) u^{(3)} |
\le \norm{u}_2 u^T{\nabla}^2 f_n(\beta_n^* + t u) u .
\]
It is immediate that \citep[Proposition 1 (6)]{Bach2010}
for $\norm{v}_2 \le 2$ and  all $u, v \in \R^d$,
\[
u^T {\nabla}^2 f_n(\beta_n^* + v) u
\ge u^T {\nabla}^2 f_n(\beta_n^*) u \exp(-\norm{v}_2)
\ge \frac{\e_0}{4} \exp(-2) \norm{u}_2^2.
\]
Since closed balls are convex,  $f_n$ is strongly convex on the closed ball $\overline{B_2(\beta_n^*)}$ \citep[Theorem 2.1.11]{Nesterov2018}.
Thus, the local strong convexity condition (1) in Proposition~\ref{prop:concentrate_ub} holds.

Since $\nabla f_n(\beta_n^*) \equiv 0$ \citep[Proposition 1 (3)]{Bach2010} 
\[
f_n(\beta_n^* + v) - f_n(\beta_n^*)
\ge \frac{v^T {\nabla}^2 f_n(\beta_n^*) v}{\norm{v}_2^2} \left( \exp(-\norm{v}_2) + \norm{v}_2 - 1 \right)
\]
Using \eqref{eq:flat:curvature} and the fact that $e^{-x} \ge 1 - x$, obtain, for all $v \in \R^d$ with $v \not= 0$, 
\begin{align*}
\frac{v^T {\nabla}^2 f_n(\beta_n^*) v}{\norm{v}_2^2} \left( \exp(-\norm{v}_2) + \norm{v}_2 - 1 \right)
&\ge \frac{\e_0}{4} \left( \exp(-\norm{v}_2) + \norm{v}_2 - 1 \right)
\\
&\ge \frac{\e_0}{4} \left(\norm{v}_2 - 1 \right).
\end{align*}
Hence, for all $v \in \R^d$ with $v \not= 0$,
\[
f_n(\beta_n^* + v) - f_n(\beta_n^*) \ge \frac{\e_0}{4} \left(\norm{v}_2 - 1 \right) .
\]
Thus, for $\norm{v}_2 > 2$, the strict optimality condition (2) in Proposition~\ref{prop:concentrate_ub} holds.

The required control of the integral (3) in Proposition~\ref{prop:concentrate_ub} also holds since
\begin{align*}
\int_{\norm{v}_2 > 2} \exp\left(- \left( f_n(\beta_n^* + v) - f_n(\beta_n^*) \right) \right) dv
&\le e^\frac{\e_0}{4} \int_{\norm{v}_2 > 2} \exp\left(- \frac{\e_0}{4} \norm{v}_2 \right) dv
< \infty.
\end{align*}

\end{proof}

\section{Proofs for Section~\ref{sec:wass_lb}}

\begin{proof}[Proof of Theorem~\ref{thm:wasserstein_lb}]
Begin by constructing a suitable Lipschitz function.
Fix $\theta \in \R^d$, and fix $\alpha \in (0, \infty)$.
Define the function $\phi_{\alpha, \theta} : \R^d \to \R$ by $\phi_{\alpha, \theta}(\w) = \exp\left( - \alpha \norm{\w - \theta}_1 \right)$.
Then for every $\w, \w' \in \R^d$,
\begin{align*}
|\phi_{\alpha, \theta}(\w) - \phi_{\alpha, \theta}(\w')|
&= |\exp\left(-\alpha \norm{\w - \theta}_1 \right) - \exp\left(-\alpha \norm{\w' - \theta}_1 \right)|
\\
&\le \alpha|\norm{\w - \theta}_1 - \norm{\w' - \theta}_1|
\\
&\le \alpha \norm{\w - \w'}_1.
\end{align*}
Therefore, $\alpha^{-1} \phi_{\alpha, \theta}$ is a bounded Lipschitz function with respect to the distance $\norm{\cdot}_1$ and the Lipschitz constant is $1$.
By assumption there exists $s$ such that $\pi \le s$. Then, using the fact that $\int_{\R^d} \exp\left( -\alpha \norm{\theta' - \theta}_1 \right) d\theta' = 2^d \alpha^{-d}$, obtain
\begin{align}
\int \phi_{\alpha, \theta}(\theta') \pi(\theta') d\theta' \nonumber
&\le s \int_{\R^d} \exp\left( -\alpha \norm{\theta' - \theta}_1 \right) d\theta' \nonumber
\\
&= s 2^d \alpha^{-d}. \label{eq:approxub}
\end{align}
Fix a positive integer $t$.
Then, for each $s \in \{ 0, \ldots, t - 1 \}$ and each $\w \in \R^d$, obtain the lower bound
\begin{align*}
&\int \phi_{\alpha, \theta}(\theta') \left( 1 - A(\theta') \right)^s P(\w, d\theta')
\\
&= \int \phi_{\alpha, \theta}(\theta') \left( 1 - A(\theta') \right)^s a(\w, \theta') dQ(\w, \theta')
+ \phi_{\alpha, \theta}(\w) \left( 1 - A(\w) \right)^{s + 1}
\\
&\ge \phi_{\alpha, \theta}(\w) \left( 1 - A(\w) \right)^{s + 1}.
\end{align*}

Now apply this lower bound multiple times:
\begin{align}
\int \phi_{\alpha, \theta}(\theta_{t}) P^t(\theta, d\theta_{t}) 
&= \int \left\{ \int \phi_{\alpha, \theta}(\theta_{t}) P(\theta_{t-1}, d\theta_{t}) \right\} P^{t-1}(\theta, d\theta_{t-1}) \nonumber
\\
&\ge \int \phi_{\alpha, \theta}(\theta_{t - 1}) \left( 1 - A(\theta_{t-1}) \right) P^{t-1}(\theta, d\theta_{t-1}) \nonumber
\\
&\vdots \nonumber
\\
&\ge \phi_{\alpha, \theta}(\theta) \left( 1 - A(\theta) \right)^{t} \nonumber
\\
&= \left( 1 - A(\theta) \right)^{t} \label{eq:phiapproxlb}.
\end{align}
The final step follows from the fact that $\phi_{\alpha, \theta}(\theta) = 1$.
Combining \eqref{eq:approxub} and \eqref{eq:phiapproxlb}, obtain the lower bound,
\begin{align}
\int \left( \frac{1}{\alpha} \phi_{\alpha, \theta} \right) dP^t(\theta, \cdot) - \int \left( \frac{1}{\alpha} \phi_{\alpha, \theta} \right) d\Pi 
\ge \frac{ \left( 1 - A(\theta) \right)^t - s 2^d \alpha^{-d}}{\alpha}. \label{eq:approxliplb}
\end{align}

The case where $\W_{\norm{\cdot}_1}(P^t(\theta, \cdot), \Pi) = +\infty$ is trivial so assume this is finite.
Then by the Kantovorich-Rubinshtein theorem \citep[][Theorem 1.14]{Villani2003} and the lower bound in \eqref{eq:approxliplb},
\begin{align*}
\W_{\norm{\cdot}_1}(P^t(\theta, \cdot), \Pi)
&\ge \sup_{ \norm{\phi}_{\text{Lip}(\norm{\cdot}_1)} \le 1} \left[ \int \phi dP^t(\theta, \cdot) - \int \phi d\Pi \right]
\\
&\ge \int \left( \frac{1}{\alpha} \phi_{\alpha, \theta} \right) dP^t(\theta, \cdot) - \int \left( \frac{1}{\alpha} \phi_{\alpha, \theta} \right) d\Pi 
\\
&\ge  \frac{ \left( 1 - A(\theta) \right)^t - s 2^d \alpha^{-d}}{\alpha}.
\end{align*}
If $A(\theta) = 1$, then taking the limit of $\alpha \to \infty$, completes the proof.
Suppose then that $A(\theta) < 1$. Maximizing this lower bound with respect to $\alpha$ yields $\alpha = 2 s^{\frac{1}{d}} (1 + d)^{\frac{1}{d}} \left( 1 - A(\theta) \right)^{-\frac{t}{d}}$.
Then
\begin{align*}
\frac{ \left( 1 - A(\theta) \right)^t - s 2^d \alpha^{-d}}{\alpha}
&= \frac{\left( 1 - \frac{1}{1 + d} \right)}{2 s^{\frac{1}{d}} \left( 1 + d \right)^{\frac{1}{d}}} \left( 1 - A(\theta) \right)^{t \left(1 + \frac{1}{d} \right)}.
\end{align*}
This completes the proof for the norm $\norm{\cdot}_1$ and by assumption, $\W_{c}(\cdot, \cdot) \ge C_{0, d} \W_{\norm{\cdot}_1}(\cdot, \cdot)$.

Finally, let $\Gamma$ be a coupling for $P^t(\theta, \cdot)$ and $\Pi$. 
Using Hölder's inequality \citep[Theorem 6.2]{Folland2013} with $1/p + 1/q = 1$,
\begin{align*}
\W_{c}^1(P^t(\theta, \cdot), \Pi)
&\le \int c(\theta', \w') \cdot 1 d\Gamma(\theta', \w')
\\
&\le \left( \int c(\theta', \w')^p d\Gamma(\theta', \w') \right)^{1/p} \left( \int 1^q d\Gamma(\theta, \theta') \right)^{1/q}
\\
&\le \left( \int c(\theta', \w')^p d\Gamma(\theta', \w') \right)^{1/p}.
\end{align*}
Taking the infimum over $\Gamma$ completes the proof.
\end{proof}

\begin{proof}[Proof of Proposition~\ref{thm:equiv}]
It will suffice to show (ii) implies (i) \cite[Theorem 2.1]{robe:rose:1997}. 
Condition  (ii) implies convergence in the bounded Lipschitz norm, that is, 
\[
\sup\left\{ \int \phi d\mu P^t - \int \phi d\Pi : \norm{\phi}_{\text{Lip}(d)}   + \sup_{x} |\phi(x)| \le 1 \right\}
\le C_{\mu} \rho^t.
\]
The following technique and proof are similar to that of an existing result, but there is additional work required to ensure the initialization can be restricted to obtain the equivalence as well as use the bounded Lipschitz norm on a general metric space \citep[Proposition 2.8]{Hairer2014}. 
Let $t \in \Z_+$ and let $\phi : \Omega \to \R$ be a non-negative, bounded, Lipschitz with Lispchitz constant $L(\phi) \in (0, \infty)$. 
Assume $\phi > 0$ on some positive probability of $\Pi$ so that $\int \phi d\Pi > 0$. 
Define $g = \phi/\int \phi d\Pi$ so that $\int g d\Pi = 1$. The function $g$ is Lipschitz since for $x, y \in \Omega$,
\[
|g(x) - g(y)|
= \frac{|\phi(x) - \phi(y)|}{\int \phi d\Pi}
\le \frac{L(\phi)}{\int \phi d\Pi} d(x, y).
\]

Define the probability measure by $\mu_\phi(B) = \int_B g d\Pi$ and so $\mu_\phi \in L^2(\Pi)$.
Thus $g$ has been constructed so that
\[
\norm{g}_{\text{Lip}(d)}  + \sup_{x} |g(x)| 
\le \frac{L(\phi)}{\int \phi d\Pi} + \frac{\sup_{x} |\phi(x)|}{\int \phi d\Pi}.
\]
By (ii), there is a constant $C_{\mu_\phi} \in (0, \infty)$ such that
\[
\int g d\mu_{\phi} P^t - \int g d\Pi
\le C_{\mu_\phi} \left[ \frac{L(\phi)}{\int \phi d\Pi} + \frac{\sup_{x} |\phi(x)|}{\int \phi d\Pi} \right] \rho^{t}.
\]
By reversibility, $\int (P^n g)^2 d\Pi = \int (P^{2n} g) g d\Pi$.
Then
\begin{align*}
\frac{1}{(\int \phi d\Pi)^2} \norm{P^t \phi - \int \phi d\Pi}_{L^2(\Pi)}^2
&= \norm{P^t g - \int g d\Pi}_{L^2(\Pi)}^2
\\
&= \int (P^t g)^2 d\Pi - \left(\int g d\Pi \right)^2
\\
&= \int (P^{2 t} g) g d\Pi - \int g d\Pi
\\
&= \int g \mu_\phi P^{2 t} - \int g d\Pi
\\
&\le C_{\mu_\phi} \left[ \frac{L(\phi)}{\int \phi d\Pi} + \frac{\sup_{x} |\phi(x)|}{\int \phi d\Pi} \right] \rho^{2 t}.
\end{align*}
Now let $\phi : \Omega \to \R$ be a bounded, Lipschitz function with Lipschitz constant $L(\phi) \in (0, \infty)$.
Denote the positive and negative parts of $\phi$ by $\phi^+, \phi^-$. 
There are then constants $C_{\mu_{\phi^+}}, C_{\mu_{\phi^-}} \in (0, \infty)$ such that
\begin{align*}
\norm{P^t \phi - \int \phi d\Pi}_{L^2(\Pi)}^2
&\le 
2 C_{\mu_{\phi^+}} \int \phi^+ d\Pi \left[ L(\phi) + \sup_{x} |\phi^+(x)| \right] \rho^{2 t}
\\
&\hspace{.4cm}+ 2 C_{\mu_{\phi^-}} \int \phi^- d\Pi \left[ L(\phi) + \sup_{x} |\phi^-(x)| \right] \rho^{2 t}.
\end{align*}
This holds for arbitrary $t$ and so applying \citep[Lemma 2.9]{Hairer2014},
\[
\norm{P^t \phi - \int \phi d\Pi}_{L^2(\Pi)}^2
\le \rho^{2t} \norm{\phi - \int \phi d\Pi}_{L^2(\Pi)}^2.
\]

Now let $\phi : \Omega \to \R$ be a bounded, lower semicontinuous function with $\int \phi d\Pi = 0$.
The approximation $\phi_m(\cdot) = \inf_{y}[ \phi(y) + m d(\cdot, y) ]$ is Lipschitz continuous, uniformly bounded and $\phi_m \uparrow \phi$ pointwise.
Now by the dominated convergence theorem \cite[Theorem 2.24]{Folland2013}, $P \phi_m \uparrow P \phi$ pointwise.
By the dominated convergence theorem and since $\phi_m$ is bounded and Lipschitz, 
\begin{align*}
\norm{P \phi}_{L^2(\Pi)}^2
&= \lim_{m \to \infty} \norm{P \phi_m}_{L^2(\Pi)}^2
\\
&\le \rho^{2} \lim_{m \to \infty} \norm{\phi_m}_{L^2(\Pi)}^2
\\
&= \rho^{2} \norm{\phi}_{L^2(\Pi)}^2.
\end{align*}

Now let $\phi \in L^2(\Pi)$ with $\int \phi d\Pi = 0$. 
By \citep[Lemma 1.37]{Kallenberg2021}, choose a sequence $(\phi_m)_m$ of bounded, continuous functions converging to $\phi$ in $L^2(\Pi)$.
By reversibility and Jensen's inequality, 
\begin{align*}
\int (P\phi_m - P\phi)^2 d\Pi
&\le \int \int (\phi_m(y) - \phi(y))^2 dP(x, y) d\Pi(x)
\\
&= \int \int (\phi_m(y) - \phi(y))^2 d\Pi(y) dP(y, x)
\\
&\to 0.
\end{align*}
So, if $\phi_m \to \phi$ in $L^2(\Pi)$, then $P\phi_m \to P\phi$ in $L^2(\Pi)$ as well.
Therefore, since $\phi_m$ is bounded and lower semicontinuous, after taking limits, conclude
\begin{align*}
\norm{P \phi}_{L^2(\Pi)}^2
&\le \rho^{2} \norm{\phi}_{L^2(\Pi)}^2.
\end{align*}
The condition $\int \phi d\Pi = 0$ can be extended to all of $L^2(\Pi)$ by shifting $\phi$ and thus this holds for all of $L^2(\Pi)$.
\end{proof}

\section{Proofs for Section~\ref{sec:componentwise}}
\label{proof:sec:rs}

\begin{proof}[Proof for Theorem~\ref{thm:lb_rs}]
Fix $\theta \in \Theta$.
For any function $\psi : \Theta \to [0, 1]$ any any $\w \in \Theta$,
\[
\int_{\Theta} \psi dP(\w, \cdot)
\ge [1 - \sum_{k = 1}^M \lambda_k A_{\w^{(-k)}}(\w^k)] \psi(\w).
\]  
Fix $\theta \in \Theta$ and choose the function $\phi(\cdot) = I_{\{ \theta \}}$.
Applying this recursively
\[
\int_{\Theta} \phi dP^t (\theta, \cdot)
\ge \left[ 1 - \sum_{k = 1}^M \lambda_k A_{\theta^{(-k)}}(\theta^k) \right]^t \phi(\theta).
\]
Using the assumption on $\Pi$, this implies the lower bound
\begin{align*}
\norm{P^t(\theta, \cdot) - \Pi}_{\text{TV}}
\ge \int_{\Theta} \phi dP^t(\theta, \cdot)
\ge \left[ 1 - \sum_{k = 1}^M \lambda_k A_{\theta^{(-k)}}(\theta^k) \right]^t.
\end{align*}
\end{proof}

\begin{proof}[Proof of Theorem~\ref{thm:rs_was_lb}]

The proof is similar to Theorem~\ref{thm:wasserstein_lb}.
Fix $\theta \in \R^d$, and fix $\alpha \in (0, \infty)$.
Define the function $\phi$ by $\phi_{\alpha, \theta}(\w) = \exp\left( - \alpha \norm{\w - \theta}_1 \right)$. Similarly obtain the lower bound
\begin{align}
\int \phi_{\alpha, \theta} dP^t(\theta, \cdot) 
&\ge \left[ 1 - \sum_{k = 1}^M \lambda_k A_{\theta^{(-k)}}(\theta^k) \right]^t
\end{align}
using the fact that $\phi_{\alpha, \theta}(\theta) = 1$.
This gives the lower bound
\begin{align}
\int \left( \frac{1}{\alpha} \phi_{\alpha, \theta} \right) dP^t(\theta, \cdot) - \int \left( \frac{1}{\alpha} \phi_{\alpha, \theta} \right) d\Pi 
\ge \frac{ \left[ 1 - \sum_{k = 1}^M \lambda_k A_{\theta^{(-k)}}(\theta^k) \right]^t - s 2^d \alpha^{-d}}{\alpha}.
\end{align}

Then by the Kantovorich-Rubinstein theorem \citep[][Theorem 1.14]{Villani2003},
\begin{align*}
\W_{\norm{\cdot}_1}(P^t(\theta, \cdot), \Pi)
&\ge \sup_{ \norm{\phi}_{\text{Lip}(\norm{\cdot}_1)} \le 1} \left[ \int \phi dP^t(\theta, \cdot) - \int \phi d\Pi \right]
\\
&\ge  \frac{ \left[ 1 - \sum_{k = 1}^M \lambda_k A_{\theta^{(-k)}}(\theta^k) \right]^t - s 2^d \alpha^{-d}}{\alpha}.
\end{align*}
Maximizing this lower bound with respect to $\alpha$, obtain
\begin{align*}
\W_{\norm{\cdot}_1}(P^t(\theta, \cdot), \Pi)
\ge \frac{\left( 1 - \frac{1}{1 + d} \right)}{2 s^{\frac{1}{d}} \left( 1 + d \right)^{\frac{1}{d}}} 
\left[ 1 - \sum_{k = 1}^M \lambda_k A_{\theta^{(-k)}}(\theta^k) \right]^{t \left(1 + \frac{1}{d} \right)}.
\end{align*}
Using Hölder's inequality \citep[Theorem 6.2]{Folland2013} completes the lower bound more generally.
\end{proof}

\end{appendix}

\bibliography{lower_bounds.bib}

\end{document}